\newcommand{\sysn}{\left\{\begin{array}{rcl}}
\newcommand{\sysk}{\end{array}\right.}
\newcommand{\markwin}{\underset{mark}{\uparrow}}
\newcommand{\tuple}[1]{\left\langle #1\right\rangle}
\newtheorem{theorem}{Theorem}[section]
\newtheorem{lemma}[theorem]{Lemma}
\theoremstyle{example}
\newtheorem{proposition}[theorem]{Proposition}
\theoremstyle{definition}
\newtheorem{definition}[theorem]{Definition}
\newtheorem{question}[theorem]{Question}
\newtheorem{corollary}[theorem]{Corollary}
\journal{...}
\begin{document}

\begin{frontmatter}

%% Title, authors and addresses

%% use the tnoteref command within \title for footnotes;
%% use the tnotetext command for the associated footnote;
%% use the fnref command within \author or \address for footnotes;
%% use the fntext command for the associated footnote;
%% use the corref command within \author for corresponding author footnotes;
%% use the cortext command for the associated footnote;
%% use the ead command for the email address,
%% and the form \ead[url] for the home page:
%%
%%\title{Topological-Algebraic Properties of Function Space with Set-Open Topology\tnoteref{label1}}
%%\tnotetext[label1]{}
%%\author{Alexander V. Osipov\corref{cor1}\fnref{label2}}
%%\ead{OAB@list.ru}
%% \ead[url]{home page}
%% \fntext[label2]{}
%% \cortext[cor1]{}
%% \address{Address\fnref{label3}}
%% \fntext[label3]{}

\title{On Ramsey properties, function spaces, and topological games}

%% use optional labels to link authors explicitly to addresses:
%% \author[label1,label2]{<author name>}
%% \address[label1]{<address>}
%% \address[label2]{<address>}

\author[label1]{Steven Clontz}

\ead[label1]{sclontz@southalabama.edu}

%\tnotetext[label1]{ }

\address[label1]{Department of Mathematics and Statistics, \\
University of South Alabama, Mobile, Alabama, U.S.A.}

\author[label2]{Alexander V. Osipov}

\ead[label2]{OAB@list.ru}

%\tnotetext[label2]{The research has been supported by .}

\address[label2]{Krasovskii Institute of Mathematics and Mechanics, Ural Federal
 University, \\ Ural State University of Economics, Yekaterinburg, Russia}

\begin{abstract}
An open question of Gruenhage asks if all strategically selectively separable
spaces are Markov selectively separable, a game-theoretic
statement known to hold for countable spaces. As a corollary of a result by
Berner and Juh\'asz, we note that the ``strong''
version of this statement,
where the second player is restricted to selecting single points in the
rather than finite subsets,
holds for all $T_3$ spaces without isolated points.
Continuing this investigation,
we also consider games related to selective sequential separability, and demonstrate
results analogous to those for selective separability. In particular,
strong selective sequential separability in the presence of the Ramsey property
may be reduced to a weaker condition on a countable sequentially dense subset.
Additionally, \(\gamma\)- and \(\omega\)-covering properties on \(X\) are shown
to be equivalent to corresponding sequential properties on \(C_p(X)\).
A strengthening of the Ramsey property is also introduced, which is still equivalent to
\(\alpha_2\) and \(\alpha_4\) in the context of \(C_p(X)\).
\end{abstract}

\begin{keyword} selection principles \sep topological games \sep predetermined
strategy \sep Markov strategy \sep covering properties \sep $C_p$
theory \sep Ramsey property \sep selectively sequentially
separable \sep $\Omega$-Ramsey property

%% keywords here, in the form: keyword \sep keyword

%% MSC codes here, in the form: \MSC code \sep code

\MSC[2010]  91A44 \sep 91A05 \sep 54C35 \sep 54C65
%% or \MSC[2008] code \sep code (2000 is the default)

\end{keyword}

\end{frontmatter}

%%
%% Start line numbering here if you want
%%
% \linenumbers

%% main text

\section{Introduction}

Let $\mathcal{A}$ and $\mathcal{B}$ be sets whose elements are
families of subsets of an infinite set $X$. Then $S_1(\mathcal{A},
\mathcal{B})$ denotes a \textit{selection principle}: for each
sequence $(A_n : n\in \omega)$ of elements of $\mathcal{A}$ there
is a sequence $(b_n: n\in \omega)$ such that for each $n$, $b_n\in
A_n$, and $\{b_n: n\in \omega\}$ is an element of $\mathcal{B}$.

$S_{fin}(\mathcal{A},\mathcal{B})$ is a \textit{selection
principle}: for each sequence $(A_{n}: n\in \omega)$ of elements
of $\mathcal{A}$ there is a sequence $(B_{n} : n\in \omega)$ of
finite sets such that for each $n$, $B_{n}\subseteq A_{n}$, and
$\bigcup_{n\in\omega}B_{n}\in\mathcal{B}$.

In this paper, by a cover we mean a nontrivial one; that is,
$\mathcal{U}$ is a cover of $X$ if $X=\bigcup \mathcal{U}$ and
$X\notin \mathcal{U}$.

 A cover $\mathcal{U}$ of a space $X$ is:

\begin{itemize}
\item an {\it $\omega$-cover} if every finite subset of $X$ is contained in a
 member of $\mathcal{U}$.

\item a {\it $\gamma$-cover} if it is infinite and each $x\in
X$ belongs to all but finitely many elements of $\mathcal{U}$.
\end{itemize}

Note that every $\gamma$-cover contains a countable
$\gamma$-cover, and every $\gamma$-cover is also an $\omega$-cover.

For a topological space $X$ we denote:

\begin{itemize}
\item $\Omega$ --- the family of all open $\omega$-covers of
$X$;

\item $\Gamma$ --- the family of all open $\gamma$-covers of
$X$.
\end{itemize}

Let $X$ be a Hausdorff topological space, and $x\in X$.
A subset $A$ of $X$
{\it converges} to a unique $x=\lim A$ if $A$ is infinite, $x\notin
A$, and for each neighborhood $U$ of $x$, $A\setminus U$ is
finite; We also assume $x=\lim\{x\}$.
We may then consider the following collections:

\begin{itemize}
\item $\Omega_x=\{A\subseteq X : x\in \overline{A}\setminus
A\text{ or }A=\{x\}\}$;

\item $\Gamma_x=\{A\subseteq X : x=\lim A\}$.
\end{itemize}

Note that if $A\in \Gamma_x$, then there exists
a countable set $A'=\{a_n:n<\omega\}\subseteq A$ with
$A'\in \Gamma_x$. As such, $\Gamma_x$ may be considered
to be the set of non-trivial convergent sequences to $x$.

As was noted earlier, $\Gamma\subseteq\Omega$; likewise,
$\Gamma_x\subseteq\Omega_x$.

Given these definitions, we may describe the following
well-known selection principles.

\begin{itemize}
\item A space $X$ has Arhangel'skii's {\it countable fan tightness}
if $X$ satisfies
$S_{fin}(\Omega_x,\Omega_x)$
for every \(x\in X\) \cite{arh}.

\item A space $X$ has Sakai's {\it countable strong fan tightness}
if $X$ satisfies
$S_{1}(\Omega_x,\Omega_x)$
for every \(x\in X\)  \cite{sak}.

\item A space $X$ has Arhangel'skii's {\it property $\alpha_4$},
if $X$ satisfies $S_{fin}(\Gamma_x,\Gamma_x)$
for every \(x\in X\)  \cite{arh0}.

\item  A space $X$ has Arhangel'skii's {\it property $\alpha_2$},
if $X$ satisfies $S_{1}(\Gamma_x,\Gamma_x)$
for every \(x\in X\)  \cite{arh0}.

%TODO strictly VS strongly
\item A space $X$ is {\it strictly
Fr$\acute{e}$chet-Urysohn} if $X$ satisfies
$S_{1}(\Omega_x,\Gamma_x)$
for every \(x\in X\)  \cite{sash}.

\item A space $X$ is {\it strongly
Fr$\acute{e}$chet-Urysohn}
%(if $x\in \bigcap\limits_n
%\overline{A_n}$ and $A_{n+1}\subset A_n$, then there exist $a_n\in
%A_n$ such that $a_n\mapsto x$)
if $X$ satisfies
$S_{fin}(\Omega_x,\Gamma_x)$
for every \(x\in X\)  \cite{mic,siw}.

\end{itemize}

It is easy to check that $X$ satisfies $S_{fin}(\Gamma_x,
\Omega_x)$ for any $x\in X$ if and only if $X$ does not contain
a copy of the \textit{sequential fan} $S_{\omega}$,
where $S_{\omega}$ is the quotient space of
countably many convergent sequences obtained by identifying all
limit points.

\begin{definition}[\cite{nosh}]
A space $X$ has the {\it Ramsey property} if for any choices
$x_{i,j}\in X$ for $i,j\in \omega$ such that $\lim\{\lim\{
x_{i,j}:j\in\omega\}:i\in\omega\}=x$ for some point $x\in X$,
there exists an infinite set $M\subseteq \omega$ such that for
every open neighborhood $U$ of $x$, $x_{m,n}\in U$ for
sufficiently large $m,n\in M$ with $m<n$.
\end{definition}

In particular, note that $x=\lim\{x_{m,m^+}:m\in M\}$ where
$m^+=\min(\{k\in M:k>M\})$, and thus Ramsey $\Rightarrow\alpha_4$
(and furthermore $\alpha_3$; see \cite{nosh}). But the relation between
$\alpha_2$ and the Ramsey property
remains open, even for topological groups (Question 3.15 in \cite{shak}).

We also will use the following strengthening of Ramsey:

\begin{definition}
A space $X$ has the {\it $\Omega$-Ramsey property} if and only if
for any choices $T_{i,j}\in[X]^{<\omega}$ for $i,j\in\omega$ such
that $\lim\{\lim\bigcup_{j\in\omega}T_{i,j}:i\in\omega\}=x$ for
some point $x\in X$, there exists an infinite set $M\subseteq
\omega$ such that for every open neighborhood $U$ of $x$,
$T_{m,n}\subseteq U$ for sufficiently large $m<n\in M$.
\end{definition}

The following implications follow for any topological space $X$
since \(\Gamma_x\subseteq\Omega_x\):

\begin{center}
$S_1(\Gamma_x,\Gamma_x) \Rightarrow S_{fin}(\Gamma_x,\Gamma_x)
\Rightarrow S_{fin}(\Gamma_x,\Omega_x) \Leftarrow
S_{1}(\Gamma_x,\Omega_x)$ \\  $\Uparrow$ \, \,\, \, \, \, \, \,\,
\, \, \, \, \, $ \Uparrow $ \,\, \, \, \,\, \, \, \, \, \,
$\Uparrow $ \,\, \, \, \,\,\,\, \, \, \, \,\, \, \, $\Uparrow$ \\
$S_1(\Omega_x,\Gamma_x) \Rightarrow S_{fin}(\Omega_x,\Gamma_x)
\Rightarrow S_{fin}(\Omega_x,\Omega_x) \Leftarrow
S_{1}(\Omega_x,\Omega_x)$

\end{center}

 If $X$ is a space and $A\subseteq X$, then the sequential closure of $A$,
 denoted by $[A]_{seq}$, is the set of all limits of sequences
 from $A$. A set $D\subseteq X$ is said to be sequentially dense
 if $X=[D]_{seq}$. A space $X$ is called sequentially separable if
 it has a countable sequentially dense set.

For a topological space $X$ we denote:
\begin{itemize}
\item $\mathcal{D}$ is the family of all dense subsets of $X$;

\item $\mathcal{S}$ is the family of all sequentially dense
subsets of $X$.
\end{itemize}

Let $\Pi$ represent $S_1$ or $S_{fin}$.
When we write $\Pi (\mathcal{A}, \mathcal{B}_x)$ without specifying
$x$, we mean $(\forall x) \Pi (\mathcal{A}, \mathcal{B}_x)$.
\medskip

As above,
the following implications hold on any topological space $X$
since \(\mathcal S\subseteq\mathcal D\):

\begin{center}
$S_1(\mathcal{S},\Gamma_x) \Rightarrow
S_{fin}(\mathcal{S},\Gamma_x) \Rightarrow
S_{fin}(\mathcal{S},\Omega_x) \Leftarrow
S_{1}(\mathcal{S},\Omega_x)$ \\  \, \, \, $\Uparrow$ \, \, \, \,
\,\,  \, \, \, \, \, $ \Uparrow $ \,\, \, \, \,\, \, \, \, \, \,
$\Uparrow $ \,\, \, \, \,\, \, \,\, \, \, $\Uparrow$
\\ $S_1(\mathcal{D},\Gamma_x) \Rightarrow
S_{fin}(\mathcal{D},\Gamma_x) \Rightarrow
S_{fin}(\mathcal{D},\Omega_x) \Leftarrow
S_{1}(\mathcal{D},\Omega_x)$

\end{center}

Some of these selection principles are known by name.

\begin{itemize}
\item A space $X$ is {\it $R$-separable}, if $X$ satisfies
$S_1(\mathcal{D}, \mathcal{D})$ (Def. 47, \cite{bbm1}).

\item A space $X$ is {\it $M$-separable} (or selectively
separable), if $X$ satisfies $S_{fin}(\mathcal{D}, \mathcal{D})$.

\item A space $X$ is {\it selectively sequentially separable}, if
$X$ satisfies $S_{fin}(\mathcal{S}, \mathcal{S})$ (Def. 1.2,
\cite{bc}).
\end{itemize}

\begin{proposition}[Proposition 1.3 in \cite{bc}] Every
sequentially dense subspace of a selectively sequentially
separable space is sequentially separable. In particular, every
selectively sequentially separable space is sequentially
separable.
\end{proposition}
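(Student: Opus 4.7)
The plan is to apply the selection principle $S_{fin}(\mathcal S,\mathcal S)$ to a constant sequence built from the putative sequentially dense subspace, and then verify that the resulting countable set witnesses sequential separability of that subspace.

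In detail, let $Y\subseteq X$ be sequentially dense in $X$, so $Y\in\mathcal S$. First I would apply $S_{fin}(\mathcal S,\mathcal S)$ to the constant sequence $(Y_n:n\in\omega)$ with $Y_n=Y$ for every $n$. This yields a sequence $(F_n:n\in\omega)$ of finite subsets $F_n\subseteq Y$ such that $D:=\bigcup_{n\in\omega}F_n$ is sequentially dense in $X$. Note that $D$ is countable and contained in $Y$, so it is the natural candidate for a countable sequentially dense subset of $Y$.

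The remaining step is to promote sequential density of $D$ in $X$ to sequential density of $D$ in the subspace $Y$. Given $y\in Y$, pick a sequence $(d_n)$ in $D$ with $d_n\to y$ in $X$ (using that $D$ is sequentially dense in $X$; if $y\in D$, the convention $y=\lim\{y\}$ covers the trivial case). Since $Y$ carries the subspace topology, any open neighborhood $V$ of $y$ in $Y$ is of the form $V=U\cap Y$ for some open $U\subseteq X$; because $d_n\in U$ eventually and $d_n\in D\subseteq Y$, we also have $d_n\in V$ eventually. Hence $d_n\to y$ in $Y$, showing $D$ is sequentially dense in $Y$.

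For the ``in particular'' clause, observe that $X$ is itself sequentially dense in $X$ (every point is the limit of its own singleton under the stated convention), so applying the first part with $Y=X$ gives a countable sequentially dense subset of $X$. I do not foresee a genuine obstacle here; the only point that needs a careful sentence is the subspace convergence argument in the previous paragraph, which is immediate from the definition of the subspace topology.
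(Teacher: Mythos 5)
Your proof is correct. The paper does not actually prove this statement---it is quoted directly as Proposition 1.3 of Bella and Costantini \cite{bc}---but your argument (apply $S_{fin}(\mathcal S,\mathcal S)$ to the constant sequence $(Y,Y,\dots)$, then note that a set $A\subseteq D\subseteq Y$ converging in $X$ to a point $y\in Y$ also converges to $y$ in the subspace topology, since $A\setminus(U\cap Y)=A\setminus U$) is the standard one, and you correctly handle the ``in particular'' clause via $X\in\mathcal S$, which holds by the convention $x=\lim\{x\}$.
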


And so the following implications hold on any topological space $X$:

\begin{center}
$S_1(\mathcal{S},\mathcal{S}) \Rightarrow
S_{fin}(\mathcal{S},\mathcal{S}) \Rightarrow
S_{fin}(\mathcal{S},\mathcal{D}) \Leftarrow
S_{1}(\mathcal{S},\mathcal{D})$ \\  \, \, $\Uparrow$ \, \, \, \,
\,\, \,  \, \, \, $ \Uparrow $ \,\, \, \, \,\, \, \,
\, \, \, $\Uparrow $ \,\, \, \, \,\, \, \,\, \, \, $\Uparrow$ \\
$S_1(\mathcal{D},\mathcal{S}) \Rightarrow
S_{fin}(\mathcal{D},\mathcal{S}) \Rightarrow
S_{fin}(\mathcal{D},\mathcal{D}) \Leftarrow
S_{1}(\mathcal{D},\mathcal{D})$

\end{center}

We now have three types of topological properties
described as selection principles:

\begin{itemize}
\item  local properties of the form $S_*(\Phi_x,\Psi_x)$;

\item  semi-local properties of the form $S_*(\Phi,\Psi_x)$.

\item  global properties of the form $S_*(\Phi,\Psi)$;
\end{itemize}

 There is a game, denoted by $G_{fin}(\mathcal{A},\mathcal{B})$,
 corresponding to $S_{fin}(\mathcal{A},\mathcal{B})$.
In this game two players,
 ONE and TWO, play a round for each natural number $n$. In the
 $n$-th round ONE chooses a set $A_n\in \mathcal{A}$ and TWO
 responds with a finite subset $B_n$ of $A_n$. A play
 $A_1,B_1;...;A_n,B_n;...$ is won by TWO if $\bigcup\limits_{n\in
 \omega} B_n\in \mathcal{B}$; otherwise, ONE wins.
Similarly, one defines the game $G_{1}(\mathcal{A},\mathcal{B})$,
associated with $S_{1}(\mathcal{A},\mathcal{B})$.

A strategy of a player is a function $\sigma$ from the set of all
finite sequences of moves of the opponent into the set of (legal)
moves of the strategy owner.

It then follows that the selection principle
$S_*(\mathcal A,\mathcal B)$ is equivalent to player ONE
lacking a winning \textit{predetermined}
strategy for $G_*(\mathcal A,\mathcal B)$ that
is defined solely on the current round number $n$ (ignoring the
moves of TWO) \cite{cl2}. Even when ONE lacks such a predetermined
winning strategy, it is still possible
for ONE to have a winning strategy that uses perfect information.

As such, we now have three types of topological games on a topological space
$X$:

\begin{itemize}
\item  local games of the form $G_*(\Phi_x,\Psi_x)$;

\item  semi-local games of the form $G_*(\Phi,\Psi_x)$.

\item  global games of the form $G_*(\Phi,\Psi)$;
\end{itemize}

Let us now more formally define our ``strategies''.

\begin{definition} A {\it strategy} for TWO in the game
$G_{fin}(\mathcal{A},\mathcal{B})$ is a function $\sigma$
satisfying $\sigma(\tuple{A_0,...,A_n})\in [A_n]^{<\omega}$ for
$\tuple{A_0,...,A_n}\in \mathcal{A}^{n+1}$. We say this strategy
is {\it winning} if whenever ONE plays $A_n\in \mathcal{A}$ during
each round $n<\omega$, TWO wins the game by playing
$\sigma(\tuple{A_0,...,A_n})$ during each round $n<\omega$. If a
winning strategy exists, then we write TWO $\uparrow
G_{fin}(\mathcal{A},\mathcal{B})$.
\end{definition}

We will also be interested in strategies that use limited information;
specifically, those that only use the current round number $n$
and the most recent move of the opponent.

\begin{definition} A {\it Markov strategy} for TWO in the game
$G_{fin}(\mathcal{A},\mathcal{B})$ is a function $\sigma$
satisfying $\sigma(A,n)\in [A_n]^{<\omega}$ for $A\in \mathcal{A}$
and $n\in \omega$. We say this Markov strategy is {\it
winning} if whenever ONE plays $A_n\in \mathcal{A}$ during each
round $n<\omega$, TWO wins the game by playing $\sigma(A_n,n)$
during each round $n<\omega$. If a winning Markov strategy exists,
then we write TWO ${\markwin}
G_{fin}(\mathcal{A},\mathcal{B})$.
\end{definition}

Both definitions may be naturally modified for the game
$G_1(\mathcal A,\mathcal B)$ instead. It is then easily
seen that \[\text{TWO} {\markwin}
G_{\ast}(\mathcal{A},\mathcal{B}) \Rightarrow \text{TWO} \uparrow
G_{\ast}(\mathcal{A},\mathcal{B}) \Rightarrow
S_{\ast}(\mathcal{A},\mathcal{B})\] where $\ast\in \{1,fin\}$.

\section{Main results}

Barman and Dow showed (\cite{bd}, Theorem 2.9) that every
separable Fr$\acute{e}$chet-Urysohn $T_2$-space is selectively
separable. By definition of Fr$\acute{e}$chet-Urysohn,
closure is equivalent to sequential closure in such spaces, so we
immediately have:

\begin{proposition}(Proposition 2.2. in \cite{bc}) Every Fr$\acute{e}$chet-Urysohn
separable $T_2$-space is selectively sequentially separable.
\end{proposition}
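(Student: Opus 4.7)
The plan is to observe that in a Fr\'echet-Urysohn space, the families $\mathcal D$ and $\mathcal S$ of dense and sequentially dense subsets coincide, and then invoke the Barman--Dow theorem directly.

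First I would recall the definitions. A space $X$ is Fr\'echet-Urysohn if for every $A\subseteq X$ and every $x\in\overline A$ there is a sequence in $A$ converging to $x$; equivalently, $\overline A = [A]_{seq}$ for all $A\subseteq X$. Since $D\in\mathcal D$ means $\overline D = X$ and $D\in\mathcal S$ means $[D]_{seq}=X$, the Fr\'echet-Urysohn property yields $\mathcal D = \mathcal S$ on $X$.

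Next I would apply this equality to the two selection principles under consideration. The property $S_{fin}(\mathcal D,\mathcal D)$ (selective separability) and the property $S_{fin}(\mathcal S,\mathcal S)$ (selective sequential separability) are formally different, but they are defined from the same families of subsets once $\mathcal D=\mathcal S$. Hence for a Fr\'echet-Urysohn space, $X$ satisfies $S_{fin}(\mathcal D,\mathcal D)$ if and only if it satisfies $S_{fin}(\mathcal S,\mathcal S)$.

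Finally I would quote the Barman--Dow theorem (Theorem 2.9 of \cite{bd}): every separable Fr\'echet-Urysohn $T_2$-space satisfies $S_{fin}(\mathcal D,\mathcal D)$. Combining this with the equivalence from the previous step gives that every separable Fr\'echet-Urysohn $T_2$-space satisfies $S_{fin}(\mathcal S,\mathcal S)$, i.e.\ is selectively sequentially separable. There is no real obstacle here; the argument is just a direct translation via the coincidence $\mathcal D=\mathcal S$ in Fr\'echet-Urysohn spaces, and the only ``work'' is the cited Barman--Dow result itself.
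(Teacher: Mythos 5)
Your proposal is correct and is essentially identical to the paper's argument: the paper likewise notes that in a Fr\'echet-Urysohn space closure coincides with sequential closure (so $\mathcal D=\mathcal S$) and then invokes the Barman--Dow theorem that separable Fr\'echet-Urysohn $T_2$-spaces are selectively separable. No gaps.
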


Let $\Gamma_x'=\{A\subseteq X:\exists B\in\Gamma_x(B\subseteq
A)\}$, and note that $\mathcal S\subseteq\Gamma_x'$ (while
$\mathcal S\not\subseteq\Gamma_x$). These may be considered the
sequences which cluster at $x$.

In particular, we have that \(S_*(\Phi,\mathcal S)\Rightarrow
S_*(\Phi,\Gamma_x')\) (with similar game-theoretic results).
We now turn to the following theorem:

\begin{theorem}\label{th2}
Let $*\in\{1,fin\}$; if $*=1$ assume $X$ is Ramsey,
and otherwise assume $X$ is $\Omega$-Ramsey.
Then for any non-empty set $\Phi$, the following are equivalent:

\begin{enumerate}

\item $X$ satisfies $S_{*}(\Phi,\mathcal{S})$ $($resp. TWO
$\uparrow G_{*}(\Phi,\mathcal{S})$, TWO ${\markwin}
G_{*}(\Phi,\mathcal{S}))$;

\item $X$ is sequentially separable and satisfies $S_{*}(\Phi,
\Gamma_{x}')$ $($resp. TWO $\uparrow G_{*}(\Phi, \Gamma_{x}')$, TWO
${\markwin} G_{*}(\Phi, \Gamma_{x}'))$;

\item $X$ has a countable sequentially dense subset $D$ where
$S_{*}(\Phi, \Gamma_{x}')$  $($resp. TWO $\uparrow G_{*}(\Phi,
\Gamma_{x}')$, TWO ${\markwin} G_{*}(\Phi, \Gamma_{x}'))$
holds for all $x\in D$.

\end{enumerate}

\end{theorem}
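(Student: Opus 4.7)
The plan is to establish the cycle $(1)\Rightarrow(2)\Rightarrow(3)\Rightarrow(1)$ uniformly across the selection-principle, perfect-strategy, and Markov-strategy variants. For $(1)\Rightarrow(2)$, apply the hypothesis to the constant sequence $A_n=A$ for some fixed $A\in\Phi$ (non-empty by assumption); the union of the selections, respectively of TWO's plays against this constant ONE, is a countable member of $\mathcal S$, so $X$ is sequentially separable. Moreover, any sequentially dense subset of $X$ contains a sequence converging to any prescribed $x$, and after (if necessary) discarding its terms that equal $x$ one obtains an element of $\Gamma_x$ contained in it; hence $\mathcal S\subseteq\Gamma_x'$ for every $x\in X$. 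This inclusion immediately converts a witness for $S_*(\Phi,\mathcal S)$, or a winning (Markov) strategy for $G_*(\Phi,\mathcal S)$, into the corresponding object for $\Gamma_x'$. The implication $(2)\Rightarrow(3)$ is immediate, taking $D$ to be any countable sequentially dense subset provided by (2).

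The substance of the theorem is $(3)\Rightarrow(1)$. Enumerate $D=\{d_i:i\in\omega\}$ and fix a partition $\omega=\bigsqcup_{i\in\omega}N_i$ into infinite pieces; set $\pi(n)=i$ and $\rho(n)=k$ whenever $n$ is the $k$-th element of $N_i$. Given a sequence $(A_n)\in\Phi^\omega$, or ONE playing such a sequence, run the $d_i$-hypothesis along the subsequence of rounds in $N_i$ to produce selections $B_n\subseteq A_n$ (singletons when $*=1$, finite sets when $*=fin$) with $\bigcup_{n\in N_i}B_n\in\Gamma_{d_i}'$. In the Markov version this splicing is literally $\tau(A,n):=\sigma_{d_{\pi(n)}}(A,\rho(n))$; for perfect strategies one feeds $\sigma_{d_i}$ only the sub-history indexed by $N_i$. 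By construction, each union $\bigcup_{n\in N_i}B_n$ contains a sequence $C_i$ converging to $d_i$.

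It remains to verify $\bigcup_n B_n\in\mathcal S$. Fix $y\in X$; by sequential density of $D$, pick $(d_{i_k})\subseteq D$ with $d_{i_k}\to y$. Enumerate $C_{i_k}=\{b_{k,j}:j\in\omega\}$ so that $b_{k,j}\to d_{i_k}$ as $j\to\infty$, whence $\lim_k\lim_j b_{k,j}=y$. Since $\Omega$-Ramsey implies Ramsey, the Ramsey property is available in both cases and produces an infinite $M\subseteq\omega$ such that $b_{m,n}$ lies in any preassigned neighbourhood of $y$ for sufficiently large $m<n\in M$. As each $b_{k,j}\in C_{i_k}\subseteq\bigcup_n B_n$, the diagonal subsequence of $(b_{m,n})_{m<n\in M}$ lies in $\bigcup_n B_n$ and converges to $y$, establishing $y\in[\bigcup_n B_n]_{seq}$.

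The main obstacle is this final merger: the hypotheses supply convergent sequences at each $d_i\in D$ separately, and one must blend countably many of them into a single sequence aimed at an arbitrary $y\in X$. This is exactly what the Ramsey axiom is designed to do; the chief bookkeeping task is to arrange the partition so that Markov strategies remain Markov after being spliced together, which the definitions of $\pi$ and $\rho$ above make routine.
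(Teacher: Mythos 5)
Your argument is correct and structurally identical to the paper's: the forward implications come from feeding a constant sequence to the hypothesis and from the inclusion $\mathcal S\subseteq\Gamma_x'$ (noted in the paper just before the theorem), and $(3)\Rightarrow(1)$ is the same partition-and-splice construction, with your $\pi$ and $\rho$ playing the role of the paper's $p(n)$ and $|\{m<n:p(m)=p(n)\}|$. The one genuine difference is the final merger in the $*=fin$ case: the paper applies the $\Omega$-Ramsey property to the doubly indexed finite sets themselves and concludes that a union of whole finite blocks converges, while you first extract a convergent sequence of points $C_i\in\Gamma_{d_i}$ from each block's union and then invoke only the plain Ramsey property on single points $b_{k,j}$. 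Since membership in $\mathcal S$ asks only for a convergent sequence of points, your route is sound, and it shows that the $fin$ half of the theorem already follows from the Ramsey hypothesis alone; this buys nothing in the $C_p(X)$ applications, where Theorem~\ref{th51} makes Ramsey and $\Omega$-Ramsey equivalent, but it is a mild weakening of the hypotheses in general. (Both your write-up and the paper's silently pass over the degenerate cases allowed by the convention $x=\lim\{x\}$ --- e.g.\ $y\in D$, or the diagonal set $\{b_{m,m^+}:m\in M\}$ being finite --- all of which are handled trivially since the offending point must then itself lie in $\bigcup_n B_n$.)
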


\begin{proof}

Let $P\in\Phi$. Then for the countable set $\{P\}$, we may
apply any variant of the first condition to obtain
$T_i\in [P]^{<\omega}$
for $i\in\omega$ with
$\bigcup\{T_i:i\in\omega\}\in\mathcal S$, demonstrating the
respective second condition, which trivially implies the third.
As such, we only need prove that the final condition implies
the first; let $D=\{d_i:i\in\omega\}$ witness that final condition.

a) Assume $S_*(\Phi,\Gamma_{x}')$ for $x\in D$.
Let $P_{i,m}\in\Phi$ for all $i,m\in\omega$.
For each $i\in\omega$, $S_*(\Phi,\Gamma_{d_i}')$ allows us to choose
$T_{i,m}\in [P_{i,m}]^{*}$ and $m_t\in\omega$ for $t\in\omega$
such that $d_i=\lim\bigcup\{T_{i,m_t}:t\in\omega\}$.
We claim that $\bigcup\{T_{i,m}:i,m\in\omega\}$ is sequentially dense.
To see this, let $x\in X$, and choose $i_s\in \omega$ for $s\in\omega$
such that $x=\lim\{d_{i_s}:s\in\omega\}$. We then choose $M\subseteq\omega$
witnessing the appropriate Ramsey property
for $\{T_{i_s,m_t}:s,t\in\omega\}$ and
$x$; it follows that $x=\lim\bigcup\{T_{i_s,m_{s^+}}:s\in M\}$.
Thus for any countable collection of sets $P_{i,m}\in\Phi$, we have
$T_{i,m}\in [P_{i,m}]^*$ with
$\bigcup\{T_{i,m}:i,m\in\omega\}$ sequentially
dense, witnessing $S_1(\Phi,\mathcal S)$.

b) Now assume $TWO\uparrow G_*(\Phi,\Gamma_{d_i}')$ is witnessed by
the strategy $\sigma_i$ for each $i\in\omega$.
Let $p:\omega\rightarrow\omega$ be a function such that
$p^{\leftarrow}(i)$ is infinite for all $i\in\omega$. For a nonempty finite
sequence $t$, let $t'$ be its subsequence removing all terms of
index $n$ such that $p(n)\neq p(|t|-1)$.
We define the strategy $\sigma$ for the game
$G_{*}(\mathcal{S},\mathcal{S})$ by
$\sigma(t)=\sigma_{p(|t|-1)}(t')$; that is, $\sigma$ partitions
any counterplay by ONE into countably many subplays according to $p$,
and uses a different $\sigma_i$ for each subplay.

Let $\alpha\in \mathcal{S}^{\omega}$, and let $\alpha_i$ be its
subsequence removing all terms of index $n$ such that $p(n)\neq
i$. Then $\bigcup\{\sigma_i(\alpha_i \upharpoonright (n+1)): n\in
\omega\}\in \Gamma_{d_i}'$ since $\sigma_i$ is a winning strategy
for TWO, so choose $n_{i,t}\in\omega$ for $t\in\omega$ where
$d_i=\lim\bigcup
\{\sigma_i(\alpha_i \upharpoonright (n_t+1)):t\in\omega\}$.

We claim that
$\bigcup\{\sigma(\alpha \upharpoonright (n+1)): n\in \omega\}\in
\mathcal{S}$, so let $x\in X$. Then there exists
$\{d_{i_s} : s\in\omega\}$ such that  $x=\lim\{d_{i_s}:s\in\omega\}$.
We then apply the appropriate Ramsey property to
$\{\sigma_{i_s}(\alpha_{i_s} \upharpoonright (n_{i_s,t}+1)): s,t\in
\omega\}$ to obtain an $M\subseteq\omega$ with
$x=\lim\{\sigma_{i_s}(\alpha_{i_s}
\upharpoonright(n_{i_s,s^+}+1)):s\in M\}$.
Since each
$
\sigma_{i_s}(\alpha_{i_s}\upharpoonright(n_{i_s,s^+}+1))
  =
\sigma(\alpha\upharpoonright (n+1))
$
for some $n\in\omega$, the result follows.

c) Finally let TWO ${\markwin} G_{1}(\mathcal{S}, \Gamma_{d_i})$
for each $i\in\omega$ be witnessed by $\sigma_i$.
Let $p:\omega\rightarrow\omega$ be a function such that
$p^{\leftarrow}(i)$ is infinite for all $i\in\omega$.
We then define the Markov strategy $\sigma$ by
\[\sigma(P,n)=\sigma_{p(n)}(P, |\{m<n: p(m)=p(n)\}|)\]
so that as in the previous case, $\sigma$ partitions
any counterplay by ONE into countably many subplays according to $p$,
and uses a different $\sigma_i$ for each subplay.

Let $\alpha\in\mathcal{S}^\omega$, and let $\alpha_i$ be its
subsequence removing all terms of index $n$ such that $p(n)\neq i$.
Then $\{\sigma_i(\alpha_i(n),n): n\in \omega\}\in \Gamma_{d_i}'$
since $\sigma_i$ is a winning strategy
for TWO, so choose $n_{i,t}\in\omega$ for $t\in\omega$ where
$d_i=\lim\{\sigma_i(\alpha_i(n_{i,t}),n_{i,t}):t\in\omega\}$.

We claim that
$\{\sigma(\alpha(n),n): n\in \omega\}\in
\mathcal{S}$, so let $x\in X$. Then there exists
$\{d_{i_s} : s\in\omega\}$ such that  $x=\lim\{d_{i_s}:s\in\omega\}$.
We then apply the appropriate Ramsey property to
$\{\sigma_{i_s}(\alpha_{i_s}(n_{i_s,t}),n_{i_s,t}):s,t\in\omega\}$
to obtain an $M\subseteq\omega$ with
$x=\lim\{\sigma_{i_s}(\alpha_{i_s}(n_{i_s,s^+}),n_{i_s,s^+}):s\in M\}$.
Since each
$
\sigma_{i_s}(\alpha_{i_s}(n_{i_s,s^+}),n_{i_s,s^+})
  =
\sigma(\alpha(n),n)
$
for some $n\in\omega$, the result follows.
\end{proof}

The previous result mirrors the following slight generalization
of theorems 16 and 41 of \cite{cl}.

\begin{theorem}[\cite{cl}]\label{th1}\label{th5}
For a topological space $X$, nonempty set $\Phi$, and $*\in \{1,fin\}$,
the following are equivalent:

\begin{enumerate}
\item $X$ satisfies $S_{*}(\Phi,\mathcal{D})$ $($resp. TWO
$\uparrow G_{*}(\Phi,\mathcal{D})$, TWO
${\markwin} G_{*}(\Phi,\mathcal{D}))$;

\item $X$ is separable and  satisfies $S_{*}(\Phi,
\Omega_{x})$ $($resp. TWO $\uparrow G_{*}(\Phi,
\Omega_{x})$, TWO ${\markwin} G_{*}(\Phi,
\Omega_{x}))$;

\item $X$ has a countable dense subset $D$ where
$S_{*}(\Phi, \Omega_{x})$  $($resp. TWO $\uparrow
G_{*}(\Phi, \Omega_{x})$, TWO ${\markwin}
G_{*}(\Phi, \Omega_{x}))$ holds for all $x\in D$.
\end{enumerate}
\end{theorem}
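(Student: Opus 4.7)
The plan is to adapt the proof of Theorem~\ref{th2} verbatim, with the key simplification that density is a neighborhood-based condition rather than a sequential one, so the Ramsey (resp.\ $\Omega$-Ramsey) hypothesis becomes unnecessary. Indeed, if a set $B$ lies in $\Omega_{d_i}$, then any open neighborhood $U$ of $d_i$ meets $B$ directly, with no diagonal extraction required to produce convergent subsequences.

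For the easy directions $(1)\Rightarrow(2)\Rightarrow(3)$, I would apply the first condition to the constant sequence $(P,P,\dots)$ for any fixed $P\in\Phi$ to produce a countable dense subset of $X$, yielding separability; and then note that the inclusion $\mathcal{D}\subseteq\Omega_x$ (modulo the singleton clause baked into the definition of $\Omega_x$) immediately transfers the selection principle or winning strategy witnessed at $\mathcal{D}$ to one witnessed at $\Omega_x$ for every $x$. The implication $(2)\Rightarrow(3)$ is immediate upon choosing any countable dense subset.

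For $(3)\Rightarrow(1)$, fix a countable dense $D=\{d_i:i\in\omega\}$ witnessing (3). In the selection principle case, given $P_{i,m}\in\Phi$ for $i,m\in\omega$, for each $i$ apply $S_{*}(\Phi,\Omega_{d_i})$ to the sequence $(P_{i,m})_{m\in\omega}$ to obtain selections $T_{i,m}\in[P_{i,m}]^{*}$ with $\bigcup_m T_{i,m}\in\Omega_{d_i}$. To verify $\bigcup_{i,m}T_{i,m}\in\mathcal{D}$, take any $x\in X$ and any open neighborhood $U$ of $x$; density of $D$ produces $d_i\in U\cap D$, and since $d_i\in\overline{\bigcup_m T_{i,m}}$ while $U$ is open, $U$ meets $\bigcup_m T_{i,m}$. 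For the game-theoretic versions, I would reuse the partitioning bookkeeping from parts (b) and (c) of the proof of Theorem~\ref{th2}: pick $p:\omega\to\omega$ with each $p^{\leftarrow}(i)$ infinite, and splice together the strategies $\sigma_i$ that win $G_{*}(\Phi,\Omega_{d_i})$ for TWO using exactly the formulas displayed there (full-information for $\uparrow$, Markov for ${\markwin}$). The $i$-th subplay's output then lies in $\Omega_{d_i}$, and density of the composite output follows by the same $d_i\in U\cap D$ argument.

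The chief obstacle is merely notational bookkeeping in the game-theoretic cases, which amounts to a line-by-line reuse of the scheme already laid out in Theorem~\ref{th2}'s proof; no new conceptual ingredient is required, since the Ramsey diagonalization there is replaced by the trivial density argument above. One minor subtlety is reconciling the clause $x\in\overline{A}\setminus A$ in the definition of $\Omega_x$ with the possibility that the selected dense set contains $x$, but this is handled either via the singleton clause $A=\{x\}$ or by observing that an open neighborhood of $x$ meeting $A$ is indifferent to whether $x$ itself lies in $A$.
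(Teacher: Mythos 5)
Your proposal is correct, but it takes a different route from the paper: the paper does not actually reprove this theorem. Its entire ``proof'' is a citation to Theorems 16 and 41 of \cite{cl} together with the one-line observation that the hypothesis $\Phi=\mathcal D$ assumed there is never used, since $S_*(\Phi,\mathcal D)$ already yields separability for any nonempty $\Phi$. You instead reconstruct a self-contained argument by de-Ramseyfying the proof of Theorem~\ref{th2}, and this works: the only role the (resp.\ $\Omega$-)Ramsey hypothesis plays there is to upgrade ``each $d_i$ is a limit of a subsequence of the $i$-th block'' to ``$x$ is a limit of a diagonal selection,'' and for density one only needs ``each $d_i$ lies in the closure of the $i$-th block,'' which the splicing via $p$ delivers directly; your $(1)\Rightarrow(2)$ via the constant play $(P,P,\dots)$ is exactly the remark the paper makes about why separability is free. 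What your approach buys is an explicit verification of the paper's claim that the arguments of \cite{cl} go through verbatim for arbitrary $\Phi$; what the paper's approach buys is brevity and an explicit pointer to where the splicing machinery originates. You are also right to flag the pedantic point that a dense set containing $x$ is not literally a member of $\Omega_x$ as defined (one should read the conclusion as $x\in\overline{A}$, or pass to $A\setminus\{x\}$); the paper glosses over this in the same way, so it is not a gap relative to the paper's standards.
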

\begin{proof}
In \cite{cl}, $\Phi=\mathcal D$ was an additional assumption,
but was never required in the proofs, since
$S_*(\Phi,\mathcal D)$ implies separability for any non-empty
$\Phi$.
\end{proof}

Recall that a $\pi$-base for a space $X$ is a family
$\mathcal{U}$ of nonempty open subsets of
$X$ such that for each nonempty open set $V\subseteq X$ there is a
$U\in \mathcal{U}$ with $U\subseteq V$.
Then the $\pi$-weight of a space $X$, denoted $\pi(X)$, is the minimal
cardinality of a $\pi$-base for $X$.

\begin{corollary}\label{th7} Let $X$ be a $T_3$-space with no isolated
points. Then the following  are equivalent:

\begin{enumerate}

\item $\pi(X)=\aleph_0$;

\item  TWO $\uparrow
G_1(\mathcal{D},\mathcal{D})$;

\item  TWO ${\markwin}
G_1(\mathcal{D},\mathcal{D})$;

\item  $X$ is separable and TWO $\uparrow
G_1(\mathcal{D},\Omega_x)$;

\item  $X$ is separable and TWO ${\markwin}
G_1(\mathcal{D},\Omega_x)$;

\item $X$ has a countable dense subset $D$ where TWO $\uparrow
G_{1}(\mathcal{D}, \Omega_{x})$ for all $x\in D$.

\item $X$ has a countable dense subset $D$ where TWO ${\markwin}
G_{1}(\mathcal{D}, \Omega_{x})$ for all $x\in D$.

\end{enumerate}

\end{corollary}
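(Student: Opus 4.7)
The plan is to bootstrap this corollary from Theorem~\ref{th1} with $\Phi=\mathcal D$ and $*=1$, combined with one direct construction from a countable $\pi$-base and one appeal to the Berner--Juh\'asz theorem referenced in the abstract. Applying Theorem~\ref{th1} immediately yields the equivalences $(2)\Leftrightarrow(4)\Leftrightarrow(6)$ and $(3)\Leftrightarrow(5)\Leftrightarrow(7)$. Since every winning Markov strategy is by definition a winning perfect-information strategy, the implications $(3)\Rightarrow(2)$, $(5)\Rightarrow(4)$, and $(7)\Rightarrow(6)$ are trivial. Thus it suffices to establish the two remaining implications $(1)\Rightarrow(3)$ and $(2)\Rightarrow(1)$ to close the chain.

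For $(1)\Rightarrow(3)$, I would fix a countable $\pi$-base $\{U_n:n<\omega\}$ and define a Markov strategy $\sigma$ for TWO in $G_1(\mathcal D,\mathcal D)$ by letting $\sigma(D,n)$ be an arbitrary point of $D\cap U_n$; this intersection is nonempty because $D$ is dense and $U_n$ is a nonempty open set. Against any counterplay $(D_n)_{n<\omega}$ by ONE, the response $\{\sigma(D_n,n):n<\omega\}$ meets every nonempty open $V\subseteq X$ (pick any $U_n\subseteq V$ and observe that $\sigma(D_n,n)\in U_n\subseteq V$) and is therefore dense, witnessing TWO $\markwin G_1(\mathcal D,\mathcal D)$. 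Notably this direction requires neither $T_3$ nor the absence of isolated points.

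The implication $(2)\Rightarrow(1)$ is the nontrivial direction and is exactly the strong point-picking theorem of Berner and Juh\'asz cited in the abstract; I would invoke it as a black box. The essence of their argument is to use $T_3$ regularity together with the absence of isolated points to manufacture a sufficiently rich supply of dense sets so that, along carefully constructed histories of ONE, any winning perfect-information strategy $\sigma$ for TWO is forced to expose a countable subset of $X$ whose canonically associated open neighborhoods form a $\pi$-base. The main obstacle---and the reason this is the hardest step---is precisely that $\sigma$ may leverage the full history of ONE's moves, so one must show that under these topological hypotheses perfect information yields no strictly greater strength than the Markov $\pi$-base strategy built in the previous step.
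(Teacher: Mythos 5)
Your proposal is correct and follows essentially the same route as the paper: Theorem~\ref{th5} handles the equivalences among (2),(4),(6) and among (3),(5),(7), the countable $\pi$-base yields the Markov strategy giving $(1)\Rightarrow(3)$, and the Berner--Juh\'asz theorem (cited in the paper as \cite[Theorem 2.1]{beju}) supplies $(2)\Rightarrow(1)$.
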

\begin{proof}
The equivalence of (1) and (2) is \cite[Theorem 2.1]{beju}.

Assuming (1), let $\{P_n:n\in\omega\}$ be a countable $\pi$-base.
We may then define $\sigma(D,n)\in D\cap P_n$ arbitrarily, and
it's easy to see that this is winning for TWO, implying (3)
and therefore (2).

All other equivalencies follow from Theorem \ref{th5}.
\end{proof}

The equivalance (2) $\Leftrightarrow$ (3) is similar to the following
open question of Gruenhage, first shown to be true when $X$ is countable
by Barman and Dow in
\cite[Theorem 2.11]{bd2}; see \cite[Lemma 37]{cl} for a
general sufficient condition which guarantees that a winning strategy
may be improved to a Markov winning strategy.

\begin{question}
When does TWO $\uparrow G_{fin}(\mathcal D,\mathcal D)$ imply
TWO ${\markwin} G_{fin}(\mathcal D,\mathcal D)$?
\end{question}

\section{$\Omega$-Ramsey in Topological Groups}

We now adapt techniques of Sakai \cite{sak3} to obtain the following lemma
giving a useful recharacterization of the $\Omega$-Ramsey property for
topological groups, which we require in the following section.

\begin{lemma}\label{lem5}
Let $\tuple{G,\cdot}$ be a topological group with unit $e$.
Then the $\Omega$-Ramsey property is equivalent to the following:
if $T_{n,m}\in[G]^{<\omega}$ and $e=\lim\bigcup\{T_{n,m}:m\in\omega\}$
for each $n\in\omega$, then there exists $M\subseteq\omega$
such that $e=\lim\bigcup\{T_{n,m}:n,m\in M,n<m\}$.
\end{lemma}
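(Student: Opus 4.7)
The plan is to prove the equivalence by recognizing that the stated condition is essentially $\Omega$-Ramsey with target at the unit $e$, and then using left-translation in $G$ to reduce the general $\Omega$-Ramsey statement to that special case.

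For the forward direction ($\Omega$-Ramsey $\Rightarrow$ the stated condition), suppose the $T_{n,m}\in[G]^{<\omega}$ satisfy $e=\lim\bigcup_m T_{n,m}$ for each $n$. Each ``inner limit'' is then $e$, so the outer collection is just $\{e\}$, which has $\lim\{e\}=e$ by the convention $x=\lim\{x\}$. This lets us invoke $\Omega$-Ramsey with $x=e$, yielding an infinite $M\subseteq\omega$ with $T_{n,m}\subseteq U$ for sufficiently large $n<m\in M$, for every open $U\ni e$. Since each $T_{n,m}$ is finite and avoids $e$, the set $\bigcup\{T_{n,m}:n,m\in M,\ n<m\}$ avoids $e$ and meets $G\setminus U$ in only finitely many points, giving $e=\lim\bigcup\{T_{n,m}:n,m\in M,\ n<m\}$ (passing to a further infinite subset of $M$ if needed to ensure the union is infinite, which is routine since each row is infinite).

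For the backward direction (the stated condition $\Rightarrow$ $\Omega$-Ramsey), fix $T_{i,j}\in[G]^{<\omega}$ with $\lim\{\lim\bigcup_j T_{i,j}:i\in\omega\}=x$, and set $y_i=\lim\bigcup_j T_{i,j}$. Adapting the translation trick from Sakai's work, define $S_{i,j}=y_i^{-1}\cdot T_{i,j}$. Because left-multiplication by $y_i^{-1}$ is a homeomorphism, $\bigcup_j S_{i,j}$ converges to $e$ for each $i$; the union is infinite and $e\notin\bigcup_j S_{i,j}$ since $y_i\notin\bigcup_j T_{i,j}$. Applying the stated condition to these $S_{i,j}$'s produces an infinite $M\subseteq\omega$ with $e=\lim\bigcup\{S_{i,j}:i,j\in M,\ i<j\}$. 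To recover the $\Omega$-Ramsey conclusion at $x$, fix an open $U\ni x$ and use joint continuity of multiplication to pick open sets $W\ni x$ and $V\ni e$ with $W\cdot V\subseteq U$. Since $y_i\to x$, we have $y_i\in W$ for all but finitely many $i\in M$; and $S_{i,j}\subseteq V$ for all but finitely many pairs $i<j$ in $M$. For such pairs, $T_{i,j}=y_i\cdot S_{i,j}\subseteq W\cdot V\subseteq U$, which is exactly the $\Omega$-Ramsey conclusion.

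The main obstacle is bookkeeping around the definition of $\lim$, which requires the converging set to be infinite and to miss the limit point: the translation trick preserves both properties, and degenerate cases (for instance, some $y_i$ coinciding with $x$, or the collection $\{y_i\}$ being $\{x\}$) are absorbed by the convention $\lim\{x\}=x$. The topological-group hypothesis is used essentially only through joint continuity of the operation in the backward direction, which lets us split an arbitrary neighborhood of $x$ as a product $W\cdot V$; this is what allows the reduction from an arbitrary target $x$ to the identity.
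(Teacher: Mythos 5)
Your proposal is correct and takes essentially the same route as the paper's proof: the forward direction applies the $\Omega$-Ramsey property to the constant outer sequence $\{e\}$, and the converse translates each block $T_{i,j}$ by $y_i^{-1}$ to recenter at $e$, applies the hypothesized condition, and then recombines using the group operation. The only cosmetic difference is in the recombination step, where the paper invokes Sakai's Lemma 2.3 to get a single $V$ with $\{x_n:n\ge k'\}\cdot V\subseteq U$, while you use joint continuity of multiplication at $(x,e)$ to produce $W\cdot V\subseteq U$ and then note $y_i\in W$ eventually --- the same device, stated slightly more self-containedly.
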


\begin{proof}
The forward direction follows by noting that $e=\lim\{e\}$ and
thus applying the $\Omega$-Ramsey property to
$\{T_{n,m}:n,m\in\omega\}$.

For the converse, let $x_n=\lim\bigcup\{T_{n,m}:m\in\omega\}$ for
each $n\in\omega$, and $e=\lim\{x_n:n\in\omega\}$ (since $G$ is
homoegeneous). If $S_{n,m}=x_n^{-1}\cdot T_{n,m}$, it follows
that $\lim\bigcup\{S_{n,m}:m\in\omega\}=x_n^{-1}\cdot x_n=e$.
We apply the assumption to obtain $M\subseteq\omega$ where
$e=\lim\bigcup\{S_{n,m}:n,m\in M,n<m\}$, and claim that $M$
witnesses $\Omega$-Ramsey.

Let $U$ be a neighborhood of $e$, which must contain
$\{x_n:n\geq k'\}$ for some $k'\in\omega$. By applying
\cite[Lemma 2.3]{sak3}, we may choose an open neighborhood $V$
of $e$ where $\{x_n:n\geq k'\}\cdot V\subseteq U$. Since
$e=\lim\bigcup\{S_{n,m}:n,m\in M,n<m\}$, we may choose $k\geq k'$
where $\bigcup\{S_{n,m}:n,m\in M,k\leq n<m\}\subseteq V$. So for $k\leq n<m$,
\[S_{n,m}\subseteq V \Rightarrow
T_{n,m}=x_n\cdot S_{n,m}\subseteq x_n\cdot V\subseteq U\]
\end{proof}

\section{Applications in $C_p$-theory}

For a Tychonoff space $X$, we denote by $C_p(X)$ the topological group
of all real-valued continuous functions on $X$ with the topology of
pointwise convergence. The symbol $\bf{0}$ stands for the constant
function to $0$.

Basic open sets of $C_p(X)$ are of the form
$[x_1,...,x_k; U_1,...,U_k]=\{f\in C_p(X): f(x_i)\in U_i$,
$i=1,...,k\}$, where each $x_i\in X$ and each $U_i$ is a non-empty
open subset of $\mathbb{R}$. When $U_i=U$ for all $i\leq k$,
we simply write $[x_1,\dots,x_k;U]$.

Consider the following result of Sakai.

\begin{theorem}[Theorem 2.5 of \cite{sak3}]
The Ramsey property is equivalent to $\alpha_2$
and $\alpha_4$ for $C_p(X)$.
\end{theorem}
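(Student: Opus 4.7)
The plan is to establish the cycle Ramsey $\Rightarrow \alpha_4 \Rightarrow \alpha_2 \Rightarrow$ Ramsey, noting that $\alpha_2 \Rightarrow \alpha_4$ is a triviality ($S_1 \Rightarrow S_{fin}$), and that only the last two implications require the topological group structure of $C_p(X)$.

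The first implication Ramsey $\Rightarrow \alpha_4$ holds on any space and was already recorded in this paper: given $A_i \in \Gamma_x$, enumerate $A_i = \{x_{i,j}\}_{j\in\omega}$; since each row converges to $x$, the iterated limit is $x$, so the hypothesis of the Ramsey property applies and yields $M \subseteq \omega$. Setting $F_i = \{x_{i,i^+}\}$ for $i \in M$ and $F_i = \emptyset$ otherwise produces the required $\alpha_4$-witness with $\bigcup F_i \in \Gamma_x$.

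For $\alpha_4 \Rightarrow \alpha_2$ in $C_p(X)$, I would exploit the standard $C_p$-duality: a sequence $\{f_k\}$ converges to $\mathbf 0$ in $C_p(X)$ iff for every $\varepsilon > 0$ the family $\{U_{k,\varepsilon}\}_k$ with $U_{k,\varepsilon} = \{x \in X : |f_k(x)| < \varepsilon\}$ forms a $\gamma$-cover of $X$. Under this translation both $\alpha_2$ and $\alpha_4$ at $\mathbf 0$ become selection principles on $\gamma$-covers of $X$. Given $\alpha_4$-witnesses $F_n \subseteq \{f_{n,k}\}_k$ with $\bigcup F_n \in \Gamma_{\mathbf 0}$, consider the minimum-modulus functions $h_n(x) = \min\{|f(x)| : f \in F_n\}$, which are continuous and whose $\varepsilon$-sublevel sets equal $\bigcup_{f \in F_n} U_{f,\varepsilon}$. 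Combined with the fact that the original $\{f_{n,k}\}_k$ itself corresponds to a full $\gamma$-cover, a diagonalization over a countable cofinal family of basic neighborhoods of $\mathbf 0$ allows one to extract a single $f_{n,k_n}$ from each row whose behavior dominates $h_n$ in the relevant sense — this is the $C_p$-theoretic upgrade from finite subfamilies to single elements.

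For $\alpha_2 \Rightarrow$ Ramsey in $C_p(X)$, by homogeneity assume $x = \mathbf 0$. Given a double array $\{f_{i,j}\}$ with $\lim_i \lim_j f_{i,j} = \mathbf 0$ and row limits $f_i = \lim_j f_{i,j}$, the translated sequences $\{f_{i,j} - f_i\}_j$ lie in $\Gamma_{\mathbf 0}$ by continuity of subtraction. Applying $\alpha_2$ yields $j_i$ with $f_{i,j_i} - f_i \to \mathbf 0$, hence $f_{i,j_i} \to \mathbf 0$. The main obstacle is then upgrading this diagonal selection to the full Ramsey conclusion — namely, producing an infinite $M$ with $f_{m,n} \in U$ for all sufficiently large $m < n$ in $M$ — which requires iterating $\alpha_2$ on suitably chosen subarrays and gluing the selections together via the group operation, in a manner parallel to the proof of Lemma \ref{lem5} where the homogeneity of $C_p(X)$ reduces questions about arbitrary limit points to questions about the unit.
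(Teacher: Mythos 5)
Your cycle (Ramsey $\Rightarrow\alpha_4\Rightarrow\alpha_2\Rightarrow$ Ramsey) is a legitimate decomposition, and the first leg is fine, but note that the paper does not prove this statement itself --- it is quoted from Sakai \cite{sak3} --- and the two legs of your cycle that carry all the content have genuine gaps. For $\alpha_4\Rightarrow\alpha_2$ you invoke ``a diagonalization over a countable cofinal family of basic neighborhoods of ${\bf 0}$''; no such family exists in general, since $C_p(X)$ is first countable only when $X$ is countable, so the extraction of a single $f_{n,k_n}$ per row cannot be carried out this way. (Your auxiliary functions also point the wrong way: convergence of $\bigcup_n F_n$ to ${\bf 0}$ is governed by $\max\{|f(x)|:f\in F_n\}$, not by the minimum-modulus $h_n$, whose sublevel sets only record that \emph{some} member of $F_n$ is small at $x$.) The actual equivalence $\alpha_2=\alpha_4$ for $C_p(X)$ is a theorem of Scheepers \cite{scheep}, proved by translating both properties into covering properties of $X$, and it cannot be obtained by a pointwise diagonalization inside $C_p(X)$.

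The second gap is in $\alpha_2\Rightarrow$ Ramsey. After translating by the row limits you obtain one convergent diagonal $\{f_{i,j_i}\}$ --- but that is just the $\alpha_2$ conclusion restated, whereas Ramsey demands an infinite $M$ with $f_{m,n}\in U$ for \emph{all} sufficiently large pairs $m<n$ from $M$. You explicitly defer this upgrade (``the main obstacle is then\dots''), but the upgrade is the entire theorem. The missing device is visible in the paper's proof of the stronger Theorem \ref{th52}: set $g_{n,m}(x)=\max\{|f_{i,m}(x)|:i\le n\}$, so that smallness of a single $g_{n,m}$ controls all rows $i\le n$ simultaneously; apply $\alpha_2$ to the array $\{g_{n,m}:n<m\}$ to obtain an increasing $\phi$ with $g_{m,\phi(m)}\to{\bf 0}$; and take $M=\{\phi^i(0):i\in\omega\}$, whose closure under $\phi$ is exactly what yields the ``all pairs $m<n\in M$'' conclusion. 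Without this (or an equivalent) construction, the proposal does not establish the hard direction.
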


By using the previous Lemma \ref{lem5}, we may show the following.

\begin{theorem}\label{th52}\label{th51}
The $\Omega$-Ramsey property is equivalent to the Ramsey,
$\alpha_2$, and $\alpha_4$ properties for $C_p(X)$.
\end{theorem}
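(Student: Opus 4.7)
The plan is to combine the preceding theorem of Sakai (Ramsey $\Leftrightarrow\alpha_2\Leftrightarrow\alpha_4$ on $C_p(X)$, Theorem 2.5 of \cite{sak3}) with a self-contained proof that $\Omega$-Ramsey and Ramsey are equivalent on $C_p(X)$. The forward direction $\Omega$-Ramsey $\Rightarrow$ Ramsey is immediate: given points $x_{i,j}$ witnessing the Ramsey hypothesis, apply $\Omega$-Ramsey to the singleton family $T_{i,j}:=\{x_{i,j}\}$.

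The substance is the converse Ramsey $\Rightarrow\Omega$-Ramsey in $C_p(X)$. Here I would use Lemma \ref{lem5} to reduce to the following concrete task: given $T_{n,m}\in[C_p(X)]^{<\omega}$ with $\mathbf{0}=\lim\bigcup\{T_{n,m}:m\in\omega\}$ for each $n$, produce an infinite $M\subseteq\omega$ such that $\mathbf{0}=\lim\bigcup\{T_{n,m}:n,m\in M,\ n<m\}$. The key device---adapted from Sakai \cite{sak3}---is to encode each finite $T_{n,m}$ by the single function
\[
\phi_{n,m}(x):=\max\{|g(x)|:g\in T_{n,m}\},
\]
which lies in $C_p(X)$ as the pointwise maximum of finitely many continuous functions (with the convention $\phi_{n,m}:=\mathbf{0}$ if $T_{n,m}=\emptyset$). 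Because the basic symmetric open neighborhoods of $\mathbf{0}$ have the form $[x_1,\dots,x_k;(-\varepsilon,\varepsilon)]$, one checks directly that $T_{n,m}\subseteq [x_1,\dots,x_k;(-\varepsilon,\varepsilon)]$ iff $\phi_{n,m}\in [x_1,\dots,x_k;(-\varepsilon,\varepsilon)]$; hence the given convergence $\bigcup_m T_{n,m}\to\mathbf{0}$ transfers into $\phi_{n,m}\to\mathbf{0}$ as $m\to\infty$ for every fixed $n$.

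I would then apply the Ramsey property of $C_p(X)$ to the array $\{\phi_{n,m}\}$: each inner limit is $\mathbf{0}$, and the outer limit is $\mathbf{0}$ by the singleton convention $\mathbf{0}=\lim\{\mathbf{0}\}$. Ramsey yields an infinite $M$ with $\phi_{n,m}\in U$ for sufficiently large $n<m$ in $M$ and every neighborhood $U$ of $\mathbf{0}$. Passing $U$ through a symmetric basic refinement and re-using the equivalence above, I recover $T_{n,m}\subseteq U$ for sufficiently large such $n<m$, which is exactly the conclusion demanded by Lemma \ref{lem5}.

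The main obstacle is the set-convergence convention behind $\Gamma_{\mathbf{0}}$: a literal application of Ramsey requires each inner set $\{\phi_{n,m}:m\in\omega\}$ to be either $\{\mathbf{0}\}$ or an infinite set avoiding $\mathbf{0}$. I would dispose of this by a routine pruning: indices $m$ with $\phi_{n,m}=\mathbf{0}$ (equivalently, $T_{n,m}\subseteq\{\mathbf{0}\}$) contribute nothing to any relevant union and may be discarded, and any $n$-strand that retains only finitely many surviving indices can then be dropped wholesale without affecting the limit being sought. This bookkeeping is the only nontrivial step beyond the dominating-function construction.
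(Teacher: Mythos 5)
Your proposal is correct, but it closes the key implication by a genuinely different route than the paper. The paper also reduces, via Lemma \ref{lem5}, to the task of finding $M$ for an array $T_{n,m}$ with $\mathbf{0}=\lim\bigcup\{T_{n,m}:m\in\omega\}$ for each $n$, and it also uses a dominating-function encoding; but there the encoding is \emph{cumulative}, $g_{n,m}(x)=\max\{|f(x)|:f\in\bigcup_{i\le n}T_{i,m}\}$, and the selection principle invoked is $\alpha_2$, i.e.\ $S_1(\Gamma_{\mathbf{0}},\Gamma_{\mathbf{0}})$: one extracts an increasing $\phi$ with $\mathbf{0}=\lim\{g_{m,\phi(m)}:m\in\omega\}$ and then builds $M=\{\phi^i(0):i\in\omega\}$ by iterating $\phi$, using the monotonicity $|f|\le g_{n,m}\le g_{m',m}$ for $f\in T_{n,m}$, $n\le m'$, to verify the conclusion. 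You instead apply the Ramsey property itself to the non-cumulative array $\phi_{n,m}=\max\{|g|:g\in T_{n,m}\}$ (legitimate, since the set of inner limits is the singleton $\{\mathbf{0}\}$ and the convention $x=\lim\{x\}$ applies, exactly as in the forward direction of Lemma \ref{lem5}); the Ramsey conclusion ``$\phi_{m,n}\in U$ for sufficiently large $m<n\in M$'' then transfers back to $T_{m,n}\subseteq U$ via the symmetric basic neighborhoods, with no iteration or cumulative maximum needed. What each approach buys: yours makes the implication Ramsey $\Rightarrow$ $\Omega$-Ramsey transparent and is slightly shorter, at the cost of leaning on the full two-dimensional Ramsey hypothesis; the paper's proves $\alpha_2\Rightarrow\Omega$-Ramsey directly from the one-dimensional diagonalization principle, which is formally a more economical hypothesis (though Sakai's theorem makes them equivalent anyway, so both arguments need that theorem to assemble the four-way equivalence). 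Your pruning remark about degenerate inner sets (indices with $\phi_{n,m}=\mathbf{0}$, or repeated values) addresses a point the paper glosses over when it asserts $\mathbf{0}=\lim\{g_{n,m}:m\in\omega\}$, so it is not a gap relative to the published argument.
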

\begin{proof}
Let $T_{n,m}\in[C_p(X)]^{<\omega}$ and
${\bf0}=\lim\bigcup\{T_{n,m}:m\in\omega\}$ for each $n\in\omega$.
We let $g_{n,m}(x)=\max\{|f(x)|:f\in \bigcup_{i\leq n}T_{i,m}\}$,
noting ${\bf0}=\lim\{g_{n,m}:m\in\omega\}$ for each $n\in\omega$.
We apply $\alpha_2$, that is, $S_1(\Gamma_{\bf0},\Gamma_{\bf0})$
to $\{g_{n,m}:n<m\in\omega\}$ to obtain an increasing mapping
$\phi:\omega\to\omega$ with
${\bf0}=\lim\{g_{m,\phi(m)}:m\in\omega\}$.

Now let $\phi^0(n)=n$ and $\phi^{i+1}(n)=\phi(\phi^i(n))$ and set
$M=\{\phi^i(0):i\in\omega\}$. We will demonstrate that
${\bf0}=\lim\{T_{n,m}:n,m\in M,n<m\}$. For $x\in X$ and
$\epsilon>0$, pick $k\in\omega$ where
$|g_{m,\phi(m)}(x)|<\epsilon$ for $k<m,m\in M$. It follows that
for $f\in T_{n,m}$ where $n,m\in M$ and $k<n<m$, let $m=\phi(m')$
where $n\leq m'$. Then $|f(x)|\leq|g_{n,m}(x)|\leq|g_{m',m}(x)|
=|g_{m',\phi(m')}(x)|<\epsilon$.
Thus \(C_p(X)\) is \(\Omega\)-Ramsey.

Since \(\Omega\)-Ramsey implies Ramsey, the result follows.
\end{proof}

Recall that the $i$-weight $iw(X)$ of a space $X$ is the smallest
infinite cardinal number $\tau$ such that $X$ can be mapped by a
one-to-one continuous mapping onto a Tychonoff space of the weight
not greater than $\tau$.

\begin{theorem}[Noble \cite{nob}] \label{th31}
Let $X$ be a   space. A space $C_{p}(X)$ is separable if and only if
$iw(X)=\aleph_0$.
\end{theorem}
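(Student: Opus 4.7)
The plan is to prove the two implications separately, using the usual embedding techniques for $C_p$-theory.

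For the direction $C_p(X)$ separable $\Rightarrow iw(X)=\aleph_0$, I would fix a countable dense set $\{f_n:n\in\omega\}\subseteq C_p(X)$ and define the diagonal map $\Delta:X\to\mathbb{R}^\omega$ by $\Delta(x)=(f_n(x))_{n\in\omega}$. This map is continuous since each coordinate $f_n$ is, and the image $\Delta(X)$ is a subspace of a Tychonoff space of countable weight, hence itself Tychonoff of weight at most $\aleph_0$. The main thing to check is injectivity: if $x\neq y$, then since $X$ is Tychonoff there exists $f\in C_p(X)$ with $f(x)\neq f(y)$; pick disjoint open $U,V\subseteq\mathbb{R}$ with $f(x)\in U$ and $f(y)\in V$, so that $[x,y;U,V]$ is a nonempty basic open set in $C_p(X)$. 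Density of $\{f_n\}$ produces some $f_n\in[x,y;U,V]$, giving $f_n(x)\neq f_n(y)$, hence $\Delta(x)\neq\Delta(y)$. This yields $iw(X)\le\aleph_0$.

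For the converse, assume $iw(X)=\aleph_0$, so fix a continuous bijection $\phi:X\to Y$ onto a Tychonoff space $Y$ of weight $\le\aleph_0$. Then $Y$ embeds in $\mathbb{R}^\omega$, and composing $\phi$ with the coordinate projections $\pi_n$ produces a countable family $g_n=\pi_n\circ\phi\in C_p(X)$ that \emph{separates the points of $X$}. I would then let $D$ be the countable set of all polynomials with rational coefficients in finitely many of the $g_n$'s, and claim $D$ is dense in $C_p(X)$.

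To verify density, take any basic open set $[x_1,\dots,x_m;U_1,\dots,U_m]$ in $C_p(X)$. Since $\{g_n\}$ separates points, some finite subcollection $g_{n_1},\dots,g_{n_k}$ already separates the finite set $\{x_1,\dots,x_m\}$, i.e.\ the map $(g_{n_1},\dots,g_{n_k}):\{x_1,\dots,x_m\}\to\mathbb{R}^k$ is injective. Standard Lagrange-style interpolation then produces a polynomial $p\in\mathbb{R}[y_1,\dots,y_k]$ with $p(g_{n_1}(x_i),\dots,g_{n_k}(x_i))\in U_i$ for every $i\le m$, and by perturbing its coefficients slightly one obtains such a $p$ with rational coefficients, so that $p(g_{n_1},\dots,g_{n_k})\in D\cap[x_1,\dots,x_m;U_1,\dots,U_m]$.

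The main obstacle is this last density step: justifying that a countable $\mathbb{Q}$-algebra generated by a point-separating family of continuous functions is dense in $C_p(X)$. This is a finite-dimensional interpolation statement (one does not need the global Stone--Weierstrass theorem, only the finite case), but care must be taken in arranging the polynomial to land in the prescribed product of open intervals while keeping the coefficients rational. Everything else is a routine application of the Tychonoff separation axiom and the embedding $Y\hookrightarrow\mathbb{R}^\omega$.
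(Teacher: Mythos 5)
Your argument is correct, and it is essentially the standard proof of Noble's theorem; the paper itself offers no proof, citing Noble directly, so there is nothing in-text to compare against. Both of your directions are sound: the diagonal map $\Delta(x)=(f_n(x))_{n\in\omega}$ built from a countable dense set is injective exactly by the density argument you give, and its image is a Tychonoff subspace of $\mathbb{R}^\omega$, hence of countable weight. For the converse, the countable $\mathbb{Q}$-algebra generated by a point-separating countable family $\{g_n\}$ (obtained from the embedding $Y\hookrightarrow\mathbb{R}^\omega$ composed with the condensation) is indeed dense, and your finite Lagrange-style interpolation at the distinct points $(g_{n_1}(x_i),\dots,g_{n_k}(x_i))\in\mathbb{R}^k$ followed by a rational perturbation of the finitely many coefficients is the right way to avoid invoking Stone--Weierstrass. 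Two small points worth making explicit: you may assume the points $x_1,\dots,x_m$ of the basic open set are pairwise distinct (otherwise merge coordinates or note the set is empty), which is needed for the separating subfamily to be injective on them; and your set $D$ must include constant polynomials so that the $m=1$ case and the interpolation both go through. With those remarks your proof is complete.
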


Note that if $X$ is itself Tychonoff and $iw(X)=\aleph_0$,
then the image of $X$ under a witnessing
one-to-one continuous mapping yields a coarser topology for $X$
which is separable and metrizable; this is the characterization
given in \cite{mc}.

In papers \cite{arh,arh2,bbm1,koc,os1,os2,os4,os10,sak,sch1}
various selection principles for a Tychonoff space $X$ were
related to the selection principles for $C_p(X)$. Likewise, in
\cite{cl,ljos,sak,sch3,sch2} various selection games for $X$ and
$C_p(X)$  and a bitopological space $(C(X),\tau_k, \tau_p)$ were
related.

So we have the following applications in $C_p$-theory.

\begin{theorem}[Theorems 22 and 43 in \cite{cl}]\label{th112}
For a Tychonoff space $X$ and $*\in \{1,fin\}$, the
following are equivalent:

\begin{enumerate}

\item TWO has a winning (Markov) strategy in $G_{*}(\Omega,
\Omega)$ on $X$;

\item TWO has a winning (Markov) strategy in $G_{*}$
$(\Omega_{\bf0},\Omega_{\bf0})$ on $C_p(X)$;

\item TWO has a winning (Markov) strategy in
$G_{*}(\mathcal{D},\Omega_{\bf0})$ on $C_p(X)$.

\end{enumerate}

\end{theorem}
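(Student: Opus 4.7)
I would prove the cycle $(1)\Rightarrow(2)\Rightarrow(3)\Rightarrow(1)$, relying on the Gerlits--Nagy/Sakai duality between open $\omega$-covers of $X$ and subsets of $C_p(X)$ clustering at the constant function ${\bf 0}$.

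The implication $(2)\Rightarrow(3)$ is immediate: since $X$ is a nonempty Tychonoff space, ${\bf 0}$ is not isolated in $C_p(X)$, so every dense $D\subseteq C_p(X)$ lies in $\Omega_{{\bf 0}}$, giving $\mathcal D\subseteq\Omega_{{\bf 0}}$. Thus any winning (Markov) strategy for TWO in $G_{\ast}(\Omega_{{\bf 0}},\Omega_{{\bf 0}})$ restricts to one for $G_{\ast}(\mathcal D,\Omega_{{\bf 0}})$, since ONE is restricted to a subfamily of legal moves.

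For $(3)\Rightarrow(1)$, I would use the key construction
\[
D(\mathcal U)=\{f\in C_p(X):\exists U\in\mathcal U,\ f{\upharpoonright}(X\setminus U)\equiv 1\}
\]
for each open $\omega$-cover $\mathcal U$ of $X$. By the Tychonoff property and the $\omega$-cover condition, $D(\mathcal U)$ is dense in $C_p(X)$: to meet a basic open set $[x_1,\ldots,x_n;V_1,\ldots,V_n]$, pick $U\in\mathcal U$ with $\{x_1,\ldots,x_n\}\subseteq U$ and construct via partition of unity a continuous $f$ with $f(x_i)\in V_i$ and $f\equiv 1$ off $U$. Given TWO's winning (Markov) strategy $\tilde\sigma$ for $G_{\ast}(\mathcal D,\Omega_{{\bf 0}})$, I would define a strategy for TWO in $G_{\ast}(\Omega,\Omega)$ on $X$ by converting each $\mathcal U_n$ to $D(\mathcal U_n)$, applying $\tilde\sigma$ to obtain $T_n$, and projecting each $f\in T_n$ to a fixed witness $U_f\in\mathcal U_n$. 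For any finite $F\subseteq X$, since $\bigcup_n T_n\in\Omega_{{\bf 0}}$, some $f\in T_n$ satisfies $f(F)\subseteq(-1,1)$; combined with $f\equiv 1$ off $U_f$, this forces $F\subseteq U_f$, so $\bigcup_n\{U_f:f\in T_n\}$ is an $\omega$-cover.

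For $(1)\Rightarrow(2)$, I would fix a partition $\omega=\bigsqcup_{k\in\omega}N_k$ into infinitely many infinite pieces and, for each $k$, run an independent copy of $\sigma$ on a subgame. Given ONE's move $A\in\Omega_{{\bf 0}}$ in round $n\in N_k$, convert to the open $\omega$-cover $\mathcal U_k(A):=\{f^{-1}((-\tfrac1{k+1},\tfrac1{k+1})):f\in A\}\setminus\{X\}$ of $X$, apply $\sigma$ to obtain $V=f^{-1}((-\tfrac1{k+1},\tfrac1{k+1}))$, and return $\{f\}$ as TWO's response. Since $\sigma$ wins the subgame on $N_k$, the family $\{V_n:n\in N_k\}$ is an $\omega$-cover of $X$, so for every finite $F\subseteq X$ some response $f_n$ lies in $[F;(-\tfrac1{k+1},\tfrac1{k+1})]$; taking $k$ sufficiently large witnesses membership in any prescribed basic neighborhood of ${\bf 0}$. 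The Markov property is preserved because the round-index reassignment $n\mapsto(k(n),|\{m<n:k(m)=k(n)\}|)$ depends only on $n$; the $\ast=\mathrm{fin}$ case proceeds analogously by replacing singletons with finite subsets.

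The main technical obstacle I anticipate is verifying that $\mathcal U_k(A)$ is genuinely an $\omega$-cover of $X$: one must rule out the degenerate case in which every $f\in A$ with $f(F)\subseteq(-\tfrac1{k+1},\tfrac1{k+1})$ also satisfies $\sup_X|f|<\tfrac1{k+1}$, so that the corresponding preimage equals $X$ and is excluded from $\mathcal U_k(A)$. This is typically addressed by refining the construction --- for instance, by employing a diagonal threshold across rounds in $N_k$, or by exploiting the topological-group structure of $C_p(X)$ to translate $A$ away from a neighborhood of ${\bf 0}$ before extracting level sets --- so that arbitrarily small thresholds may be reached while keeping the response functions within the legal move $A$.
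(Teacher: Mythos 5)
The paper does not actually prove this statement---it is imported verbatim from Theorems 22 and 43 of \cite{cl}---so your reconstruction can only be judged on its own terms. The architecture you chose is the standard one and matches what the paper does for its analogous result relating $G_*(\Gamma_F,\Omega)$, $G_*(\Gamma_{\bf0},\Omega_{\bf0})$, and $G_*(\mathcal S,\Omega_{\bf0})$: your $(3)\Rightarrow(1)$ via $D(\mathcal U)$ is essentially complete, and $(2)\Rightarrow(3)$ is fine modulo the harmless point that under the paper's definition a dense set containing ${\bf0}$ is not literally in $\Omega_{\bf0}$ (delete ${\bf0}$ from ONE's move first; density survives since ${\bf0}$ is not isolated).

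The genuine gap is the one you flagged in $(1)\Rightarrow(2)$, and neither of your proposed repairs closes it. Translating $A$ by a group element moves its cluster point away from ${\bf0}$, so level sets of the translated functions no longer detect membership in neighborhoods of ${\bf0}$; and no choice of thresholds, diagonal or otherwise, can help against a single move such as $A=\{$the constant functions $1/n : n\ge 1\}$, for which $f^{\leftarrow}[(-\epsilon,\epsilon)]$ is either $X$ or $\emptyset$ for every $f\in A$ and every $\epsilon$. The correct fix is a case split rather than a refinement of the level-set construction. If $\mathcal U_k(A_n)$ fails to be an $\omega$-cover, fix a finite $F_0$ witnessing this; since ${\bf0}\in\overline{A_n}$ there is $f\in A_n$ with $F_0\subseteq f^{\leftarrow}[(-\frac{1}{k+1},\frac{1}{k+1})]$, and that preimage must then equal $X$, i.e.\ $|f|<\frac{1}{k+1}$ everywhere; TWO answers with this $f$ directly. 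Now given a basic neighborhood $[F;(-\epsilon,\epsilon)]$ of ${\bf0}$, choose $k$ with $\frac{1}{k+1}<\epsilon$: either some round of block $N_k$ was degenerate, and that round's answer already lies in the neighborhood, or no round of $N_k$ was degenerate, in which case the $k$-th copy of $\sigma$ received a complete play of genuine $\omega$-covers and its output is an $\omega$-cover containing a set $\supseteq F$, whose associated function lies in $[F;(-\frac{1}{k+1},\frac{1}{k+1})]$. The dichotomy ``is $\mathcal U_{k(n)}(A_n)$ an $\omega$-cover?'' depends only on ONE's current move and the round index, so the Markov version survives; one must only note that $\sigma$'s winning guarantee is invoked solely for blocks in which every round was non-degenerate, so the sub-play fed to $\sigma$ is complete exactly when its guarantee is needed. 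Without this dichotomy the argument genuinely fails, so as written your proof of $(1)\Rightarrow(2)$ is incomplete.
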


\begin{corollary}\label{th33} Let $X$ be a Tychonoff space with
a coarser second-countable topology
(that is, \(iw(X)=\aleph_0\)) and $*\in
\{1,fin\}$. The following assertions are equivalent:

\begin{enumerate}

\item TWO has a winning (Markov) strategy in $G_{*}(\Omega,
\Omega)$ on $X$;

\item TWO has a winning (Markov) strategy in $G_{*}$
$(\Omega_{\bf0},\Omega_{\bf0})$ on $C_p(X)$;

\item TWO has a winning (Markov) strategy in
$G_{*}(\mathcal{D},\Omega_{\bf0})$ on $C_p(X)$.

\item TWO has a winning (Markov) strategy in
$G_{*}(\mathcal{D},\mathcal{D})$ on $C_p(X)$;

\end{enumerate}

\end{corollary}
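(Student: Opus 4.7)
The plan is to observe first that the equivalence of (1), (2), and (3) is exactly the content of Theorem \ref{th112} and uses no hypothesis on $X$ beyond being Tychonoff. So the real work consists in chaining (3) $\Leftrightarrow$ (4), which is where the assumption $iw(X)=\aleph_0$ enters via Theorem \ref{th31}: it guarantees that $C_p(X)$ is separable. Once separability of $C_p(X)$ is in hand, Theorem \ref{th5} (with $\Phi=\mathcal D$ and $X$ replaced by $C_p(X)$) becomes the bridge between the global game $G_*(\mathcal D,\mathcal D)$ and the semi-local games $G_*(\mathcal D,\Omega_x)$.

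For the direction (4) $\Rightarrow$ (3), I would simply apply the ``1 $\Rightarrow$ 2'' half of Theorem \ref{th5}: a winning (Markov) strategy for TWO in $G_*(\mathcal D,\mathcal D)$ on $C_p(X)$ yields, for every $x\in C_p(X)$, a winning (Markov) strategy in $G_*(\mathcal D,\Omega_x)$; specializing to $x=\mathbf{0}$ gives (3).

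For the direction (3) $\Rightarrow$ (4), I would use the ``3 $\Rightarrow$ 1'' half of Theorem \ref{th5}, for which I need a countable dense subset $D\subseteq C_p(X)$ such that TWO has a winning (Markov) strategy in $G_*(\mathcal D,\Omega_d)$ for every $d\in D$. Since $iw(X)=\aleph_0$, Theorem \ref{th31} produces such a countable dense set $D$. To upgrade the winning (Markov) strategy at $\mathbf{0}$ to one at each $d\in D$, I would invoke the homogeneity of the topological group $C_p(X)$: if $\tau_d(f)=f+d$ is the translation homeomorphism carrying $\mathbf{0}$ to $d$, and $\sigma$ is a winning (Markov) strategy for TWO in $G_*(\mathcal D,\Omega_{\mathbf 0})$, then the push-forward $\sigma_d$ defined by $\sigma_d(\langle A_0,\dots,A_n\rangle)=\tau_d(\sigma(\langle \tau_d^{-1}(A_0),\dots,\tau_d^{-1}(A_n)\rangle))$ (and analogously in the Markov case $\sigma_d(A,n)=\tau_d(\sigma(\tau_d^{-1}(A),n))$) is a winning (Markov) strategy in $G_*(\mathcal D,\Omega_d)$, because $\tau_d$ preserves both the family $\mathcal D$ of dense sets and the local family $\Omega_x$.

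The main obstacle is checking that this translation argument preserves the Markov restriction on the strategy, i.e.\ that $\sigma_d$ still depends only on the most recent move of ONE and the round number $n$; this is immediate from the formula, since $\tau_d$ itself does not depend on the history. Beyond that technical check, the argument is essentially bookkeeping: Theorems \ref{th112}, \ref{th31}, and \ref{th5} do all the heavy lifting, with homogeneity of $C_p(X)$ being the glue that converts a single-point local statement at $\mathbf{0}$ into the uniform local statement over $D$ that Theorem \ref{th5} requires.
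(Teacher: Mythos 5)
Your proposal is correct and follows essentially the same route as the paper, whose proof simply cites Theorems \ref{th112}, \ref{th31}, and \ref{th5} to chain the four conditions together. The only substantive addition is your explicit translation/homogeneity argument for upgrading the local game at $\mathbf{0}$ to all points of a countable dense subset of $C_p(X)$ --- a step the paper leaves implicit but which is indeed needed to invoke Theorem \ref{th5}, and which you verify correctly, including the preservation of the Markov restriction.
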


\begin{proof}
By Theorems \ref{th1}, \ref{th31} and \ref{th112},
 the items (1-4) are equivalent.
\end{proof}

\begin{corollary} Let $X$ be a Tychonoff space with a coarser
second-countable topology. The following assertions are equivalent:

\begin{enumerate}

\item $\pi(C_p(X))=\aleph_0$;

\item  TWO $\uparrow$
$G_1(\mathcal{D},\mathcal{D})$ for $C_p(X)$;

\item TWO $\uparrow$ $G_1(\mathcal{D},\Omega_{x})$
for $C_p(X)$;

\item TWO $\uparrow$ $G_{1}(\Omega, \Omega)$ for
$X$;

\item  TWO ${\markwin}$
$G_1(\mathcal{D},\mathcal{D})$ for $C_p(X)$;

\item TWO ${\markwin}$
$G_1(\mathcal{D},\Omega_{x})$ for $C_p(X)$;

\item TWO ${\markwin}$ $G_{1}(\Omega, \Omega)$
for $X$;

\item $X$ is countable.

\end{enumerate}

\end{corollary}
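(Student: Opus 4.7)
The plan is to assemble the equivalences between items (1)--(7) from the corollaries already established in this paper, and then separately verify (1)~$\Leftrightarrow$~(8) by a direct computation of $\pi(C_p(X))$. First observe that $C_p(X)$ is a Tychonoff topological group with no isolated points whenever $X$ is non-empty (a basic neighborhood of $\mathbf{0}$ always contains small non-zero constant functions), so Corollary~\ref{th7} applies to the space $C_p(X)$. Since $iw(X)=\aleph_0$, Noble's Theorem~\ref{th31} guarantees that $C_p(X)$ is separable, so the separability hypotheses in items (4)--(5) of Corollary~\ref{th7} hold automatically. This yields (1)~$\Leftrightarrow$~(2)~$\Leftrightarrow$~(3)~$\Leftrightarrow$~(5)~$\Leftrightarrow$~(6) of the present corollary (using the homogeneity of the topological group $C_p(X)$ to pass between ``$\Omega_x$ for all $x$'' and ``$\Omega_x$ for some $x$ in a countable dense set''). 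Corollary~\ref{th33} then directly supplies (2)~$\Leftrightarrow$~(4) and (5)~$\Leftrightarrow$~(7).

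What remains is (1)~$\Leftrightarrow$~(8). The implication (8)~$\Rightarrow$~(1) is straightforward: if $X$ is at most countable, then $C_p(X)\subseteq\mathbb{R}^X$ is second countable, hence $\pi(C_p(X))\le w(C_p(X))=\aleph_0$. For the contrapositive of (1)~$\Rightarrow$~(8), suppose $|X|>\aleph_0$ and let $\{V_n:n\in\omega\}$ be any countable family of non-empty open subsets of $C_p(X)$. Shrink each $V_n$ to a contained basic open set $B_n=[x^n_1,\dots,x^n_{k_n};U^n_1,\dots,U^n_{k_n}]$; the set $F=\bigcup_n\{x^n_i:1\le i\le k_n\}$ is countable, so pick $y\in X\setminus F$. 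Using the Tychonoff property of $X$ to prescribe finitely many independent values of a continuous real-valued function, choose $f_n\in C_p(X)$ with $f_n(x^n_i)\in U^n_i$ for each $i$ and $f_n(y)=2$. Then $f_n\in B_n\subseteq V_n$ while $f_n\notin W:=[y;(0,1)]$, so no $V_n$ is a subset of the non-empty open set $W$; hence $\{V_n\}$ fails to be a $\pi$-base for $C_p(X)$, and therefore $\pi(C_p(X))>\aleph_0$.

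The main obstacle is this last computation, since it is the only piece not already packaged by the previous corollaries. The essential technical input is the Tychonoff property of $X$, which lets us independently prescribe the value of a continuous function at the finitely many ``test points'' of a basic open set together with an auxiliary value at the chosen point $y$; all the other equivalences are routine once Corollaries~\ref{th7} and~\ref{th33} are invoked.
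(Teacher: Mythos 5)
Your proposal is correct, and for items (1)--(7) it follows exactly the paper's route: apply Corollary~\ref{th7} to the space $C_p(X)$ (which is $T_3$ and crowded), using Noble's Theorem~\ref{th31} to supply the separability hypothesis, and then splice in Corollary~\ref{th33} to transfer between $G_1(\mathcal D,\mathcal D)$, $G_1(\mathcal D,\Omega_x)$ on $C_p(X)$ and $G_1(\Omega,\Omega)$ on $X$. Where you genuinely diverge is item (8): the paper disposes of it by citing \cite[Theorem 17]{cl3}, which gives the equivalence of ``$X$ countable'' with (6) and (7) without any $iw(X)=\aleph_0$ hypothesis, whereas you prove (1)~$\Leftrightarrow$~(8) directly by computing that $\pi(C_p(X))=\aleph_0$ exactly when $X$ is countable. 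Your computation is sound: the countable case follows from second countability of $\mathbb{R}^X$, and in the uncountable case your argument (shrink each member of a putative countable $\pi$-base to a basic open set, pick $y$ outside the countable set of supports, and exhibit $[y;(0,1)]$ as an open set containing no member) is the standard proof that $\pi w(C_p(X))=|X|$; the only hygiene points are that each $B_n$ must be taken non-empty so that a witness $g_n\in B_n$ exists whose values at the support points you copy when prescribing $f_n$, and that Tychonoff separation lets you additionally force $f_n(y)=2$. The trade-off: your argument is self-contained and elementary but uses the hypothesis $iw(X)=\aleph_0$ only for (1)--(7), while the cited result shows the (8)~$\Leftrightarrow$~(6),(7) link holds for arbitrary Tychonoff $X$, which is slightly more information than the corollary itself records.
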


\begin{proof}
Items (1-7) follow from Theorem \ref{th7} and Corollary \ref{th33}.
The fact that (8) is equivalent to (6) and (7) doesn't require $iw(X)=\aleph_0$
and may be found in \cite[Theorem 17]{cl3}
along with several other equivalencies.
\end{proof}

We now turn to the case where TWO may choose finite sets
each round.

\begin{corollary} Let $X$ be a separable metrizable space.
Then the following  are equivalent:

\begin{enumerate}

\item  TWO $\uparrow$
$G_{fin}(\mathcal{D},\mathcal{D})$ for $C_p(X)$;

\item  TWO $\uparrow$
$G_{fin}(\mathcal{D},\Omega_{x})$ for $C_p(X)$;

\item TWO $\uparrow$ $G_{fin}(\Omega, \Omega)$ for
$X$;

\item  TWO ${\markwin}$
$G_{fin}(\mathcal{D},\mathcal{D})$ for $C_p(X)$;

\item TWO ${\markwin}$
$G_{fin}(\mathcal{D},\Omega_{x})$ for $C_p(X)$;

\item TWO ${\markwin}$ $G_{fin}(\Omega,
\Omega)$ for $X$;

\item $X$ is $\sigma$-compact.

\end{enumerate}

\end{corollary}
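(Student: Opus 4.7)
The plan is to handle the equivalences (1)--(6) through direct application of Corollary \ref{th33} and then bridge to (7) via an explicit construction in one direction and a classical characterization in the other. Since $X$ is separable metrizable, it is Tychonoff with a countable base, so $iw(X)\le w(X)=\aleph_0$, which allows Corollary \ref{th33} to apply with $*=\mathrm{fin}$. This corollary supplies two three-way equivalences at once: the ``winning'' variant gives (1) $\Leftrightarrow$ (2) $\Leftrightarrow$ (3), and the ``Markov winning'' variant gives (4) $\Leftrightarrow$ (5) $\Leftrightarrow$ (6). The obvious implications ``Markov winning $\Rightarrow$ winning'' then give (4) $\Rightarrow$ (1), (5) $\Rightarrow$ (2), and (6) $\Rightarrow$ (3). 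To close the cycle it therefore suffices to prove (7) $\Rightarrow$ (6) and (3) $\Rightarrow$ (7).

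For (7) $\Rightarrow$ (6) I would construct a Markov strategy by hand. Write $X=\bigcup_{j\in\omega}K_j$ as an increasing union of compact sets, and fix a bijection $n\mapsto(j_n,k_n)$ of $\omega$ onto $\omega\times\omega$. Given an $\omega$-cover $\mathcal{U}$ at round $n$, the collection $\{U^{k_n}:U\in\mathcal{U}\}$ is an open cover of the compact product $K_{j_n}^{k_n}\subseteq X^{k_n}$, because every $k_n$-tuple of points of $X$, being finite, is contained in a single member of $\mathcal{U}$; extract a finite subcover and let $\sigma(\mathcal{U},n)\subseteq\mathcal{U}$ be the finite subfamily indexing it. For any finite $F\subseteq X$ with $|F|=k$ and $F\subseteq K_j$, at a round $n$ with $(j_n,k_n)=(j,k)$ some $U\in\sigma(\mathcal{U},n)$ contains a $k$-tuple enumerating $F$, hence $F\subseteq U$. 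So $\bigcup_n\sigma(\mathcal{U}_n,n)$ is an $\omega$-cover and $\sigma$ is a winning Markov strategy for TWO.

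The main obstacle is the direction (3) $\Rightarrow$ (7): TWO having even a perfect-information winning strategy in $G_{fin}(\Omega,\Omega)$ forces $\sigma$-compactness of a separable metrizable $X$. Here I would appeal to the classical theorem of Telg\'arsky, together with the observation that in any Lindel\"of space $G_{fin}(\Omega,\Omega)$ and the standard Menger game $G_{fin}(\mathcal{O},\mathcal{O})$ have the same set of TWO-winning strategies (after passing to finite unions, an $\omega$-cover is essentially an open cover, and in a separable metric space one may restrict to countable subcovers). The analogous result in the $G_1$ setting -- that TWO winning $G_1(\Omega,\Omega)$ on such $X$ implies $X$ is countable -- was invoked in the previous corollary via \cite[Theorem 17]{cl3}, and I expect the present $G_{fin}$ version to be available in the same reference, which would complete the cycle.
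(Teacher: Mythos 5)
Your proof is correct, but it closes the cycle to (7) by a different route than the paper. Both arguments begin identically: Corollary \ref{th33} (applicable since $iw(X)\le w(X)=\aleph_0$) gives (1) $\Leftrightarrow$ (2) $\Leftrightarrow$ (3) and (4) $\Leftrightarrow$ (5) $\Leftrightarrow$ (6). The paper then finishes with two citations to \cite{cl}: Corollary 39 there gives (3) $\Leftrightarrow$ (6) (i.e., for $G_{fin}(\Omega,\Omega)$ on such spaces a full-information winning strategy for TWO can be improved to a Markov one), and Lemma 24 gives (6) $\Leftrightarrow$ (7). You instead prove (7) $\Rightarrow$ (6) by an explicit Markov strategy -- covering the compact powers $K_{j_n}^{k_n}$ by the sets $U^{k_n}$ and extracting finite subcovers, which is exactly the standard argument and is correct -- and you obtain (3) $\Rightarrow$ (7) by reducing to the Menger game and invoking Telg\'arsky's theorem that, for metrizable spaces, TWO has a winning strategy in the Menger game iff the space is $\sigma$-compact. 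That reduction is sound, though your phrasing overstates it: the two games do not literally have ``the same set of TWO-winning strategies'' (their move sets differ); what you need, and what does hold, is the one implication that TWO $\uparrow G_{fin}(\Omega,\Omega)$ yields TWO $\uparrow G_{fin}(\mathcal O,\mathcal O)$, by replacing ONE's open cover with the $\omega$-cover of its finite unions (handling separately the trivial case of a finite subcover) and unpacking TWO's responses. Your approach is more self-contained on the (7) $\Rightarrow$ (6) side and trades the paper's two specific lemmas from \cite{cl} for one classical external theorem; the paper's version is shorter but entirely citation-driven. The speculative appeal to a $G_{fin}$ analogue of \cite[Theorem 17]{cl3} is unnecessary once the Telg\'arsky argument is in place.
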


\begin{proof}
Second-countability allows us to
apply Corollary \ref{th33} to show (1-3) are mutually equivalent,
as are (4-6).
By \cite[Corollary 39]{cl}, (3) is equivalent to (6),
and by \cite[Lemma 24]{cl}, (6) equivalent to (7).
\end{proof}

We now demonstrate analogous results, replacing $\mathcal D$
and $\Omega$ with $\mathcal S$ and $\Gamma$.

\medskip

We recall that a subset of $X$ that is the
 complete preimage of zero for a certain function from~$C(X)$ is called a zero-set.
A subset $O\subseteq X$  is called  a cozero-set (or functionally
open) of $X$ if $X\setminus O$ is a zero-set.

A $\gamma$-cover $\mathcal{U}$ of co-zero sets of $X$  is {\it
$\gamma_F$-shrinkable} if there exists a $\gamma$-cover $\{F(U) :
U\in \mathcal{U}\}$ of zero-sets of $X$ with $F(U)\subseteq U$ for
some $U\in \mathcal{U}$ (\cite{os2}).

For a topological space $X$ we let $\Gamma_F\subseteq\Gamma$
denote the family of $\gamma_F$-shrinkable covers of $X$.

\begin{theorem}\label{th34}
For a Tychonoff space $X$ with $*\in \{1,fin\}$,
the following are equivalent:

\begin{enumerate}

\item TWO has a winning (Markov) strategy in $G_{*}(\Gamma_F,
\Omega)$ on $X$;

\item TWO has a winning (Markov) strategy in $G_{*}$
$(\Gamma_{\bf0},\Omega_{\bf0})$ on $C_p(X)$;

\item TWO has a winning (Markov) strategy in
$G_{*}(\mathcal{S},\Omega_{\bf0})$ on $C_p(X)$.

\end{enumerate}

\end{theorem}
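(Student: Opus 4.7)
The plan is to mirror the proof of Theorem~\ref{th112} (Theorems~22 and~43 of~\cite{cl}), adapting the Sakai-style duality between cover and convergence properties to the $\Gamma_F$-shrinkable setting, and combining it with the inclusion $\mathcal{S}\subseteq\Gamma_{\bf0}'$ to handle the sequentially-dense variant. I would organize the argument as the cycle $(1)\Leftrightarrow(2)$, $(2)\Rightarrow(3)$, and $(3)\Rightarrow(1)$; this yields $(3)\Rightarrow(2)$ by transitivity.

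For $(1)\Leftrightarrow(2)$ the key is a Sakai-style dictionary. Given $\mathcal{U}=\{U_i\}\in\Gamma_F$ on $X$ with zero-set $\gamma$-cover shrinking $\{F_i\}$, apply the Tychonoff property to select continuous $f_i:X\to[0,1]$ with $f_i(F_i)=\{0\}$ and $f_i(X\setminus U_i)=\{1\}$; since each $x\in X$ lies in all but finitely many $F_i$, we have $f_i(x)\to 0$, so $\{f_i\}\in\Gamma_{\bf0}$. Conversely, any $\{f_i\}\in\Gamma_{\bf0}$ produces the $\gamma_F$-shrinkable cover $\{\{|f_i|<1\}\}$ with shrinking $\{\{|f_i|\le 1/2\}\}$. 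On the target side, a selected $\omega$-cover on $X$ yields the $\Omega_{\bf0}$-condition only at precision $\epsilon=1$, so to cover arbitrary $\epsilon$ one runs parallel copies of the $X$-game at scales $1/m$ using the refined covers $\{\{|f_i|<1/m\}\}\in\Gamma_F$, then folds the doubly-indexed plays into a single $\omega$-sequence via a bijection $\omega\leftrightarrow\omega\times\omega$. Translating TWO's moves through this dictionary (with some bookkeeping in the Markov variant to keep the choice of $m$ consistent with the round number) yields both directions.

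For $(2)\Rightarrow(3)$ I exploit that $\mathcal{S}\subseteq\Gamma_{\bf0}'$: every sequentially dense $A\subseteq C_p(X)$ contains a sequence $B\subseteq A$ converging to ${\bf0}$, since ${\bf0}$ is the sequential limit of some subsequence of $A$. Given a winning (Markov) strategy $\sigma$ for TWO in $G_*(\Gamma_{\bf0},\Omega_{\bf0})$, define $\tilde\sigma$ for $G_*(\mathcal{S},\Omega_{\bf0})$ by deterministically extracting such a $B$ from each $\mathcal{S}$-play $A$ and applying $\sigma$ to the induced history of $B$'s; the resulting moves lie in $B\subseteq A$ and hence are legal, and their union is in $\Omega_{\bf0}$ by the winning property of $\sigma$. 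This adapts verbatim to both the Markov and perfect-information cases.

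The main obstacle is $(3)\Rightarrow(1)$, since $\Gamma_{\bf0}\not\subseteq\mathcal{S}$: a sequence converging to ${\bf0}$ is typically not sequentially dense, so the dual $\Gamma_{\bf0}$-sequences of our $\Gamma_F$-plays cannot be fed directly into a strategy for $G_*(\mathcal{S},\Omega_{\bf0})$. My plan is to translate each $\mathcal{U}_n\in\Gamma_F$ to its $\Gamma_{\bf0}$-sequence $S_n\subseteq C_p(X)$ and augment it to a sequentially dense set $S_n+D$ using a fixed countable sequentially dense $D$ (whose existence must be arranged in advance, for instance by restricting to the case $C_p(X)$ sequentially separable, outside of which one expects both sides of the equivalence to trivialize), apply TWO's $\mathcal{S}$-strategy to $S_n+D$, and then project each output $b_n=a_n+d_n$ back to $a_n\in S_n$ (corresponding to an element of $\mathcal{U}_n$) using the additive group structure of $C_p(X)$. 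Verifying that the projected family forms an $\omega$-cover is the delicate step: it will require a careful choice of $D$ whose elements are concentrated near ${\bf0}$ along with a Ramsey-flavored argument ensuring the $D$-components $d_n$ do not absorb the closure-at-${\bf0}$ witnesses, paralleling the tricky reverse direction in Theorem~\ref{th112}'s proof in~\cite{cl}.
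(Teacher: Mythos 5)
Your translations for $(1)\Rightarrow(2)$ and $(2)\Rightarrow(3)$ match the paper's: the dictionary sending $B\in\Gamma_{\bf 0}$ to the $\gamma_F$-shrinkable cover of preimages of small intervals (with the scale tied to the round number, plus the refinement of TWO's strategy so that every cofinite subset of its output is still an $\omega$-cover) is exactly how the paper proves $(1)\Rightarrow(2)$, and extracting from each $S\in\mathcal S$ a subsequence converging to $\bf 0$ is exactly its $(2)\Rightarrow(3)$. Your extra direct argument for $(2)\Rightarrow(1)$ is harmless but unnecessary, since the cycle closes through $(3)$.

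The genuine gap is in $(3)\Rightarrow(1)$. You correctly identify that the $\Gamma_{\bf0}$-sequence dual to a $\Gamma_F$-cover is not sequentially dense, but your repair --- augmenting it to $S_n+D$ for a fixed countable sequentially dense $D$ and projecting TWO's answers back --- does not work and is not what is needed. It presupposes sequential separability of $C_p(X)$, which is not among the hypotheses (and condition (3) does not trivialize without it, since $C_p(X)$ itself always belongs to $\mathcal S$). More fatally, the projection $b=a+d\mapsto a$ destroys the only information the strategy provides: knowing that ${\bf0}$ lies in the closure of the selected $b$'s says nothing about their $a$-components, because the $d$-components range over a set that must be sequentially dense and hence cannot be ``negligible near ${\bf0}$'' in any useful sense; the Ramsey-flavored argument you defer to is exactly the part that fails. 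The paper instead associates to each $\mathcal U\in\Gamma_F$ the set $S(\mathcal U)=\{f\in C(X): f\upharpoonright(X\setminus U)\equiv 1\text{ for some }U\in\mathcal U\}$, which is sequentially dense in $C_p(X)$ by \cite[Lemma 6.5]{os2} --- this is precisely where $\gamma_F$-shrinkability is used. Feeding the $S(\mathcal U_n)$ to TWO's strategy for $G_*(\mathcal S,\Omega_{\bf0})$ returns functions $f$ with designated $U_{f,n}\in\mathcal U_n$, and the $\Omega_{\bf0}$-property at the single precision $1/2$ already forces $F\subseteq U_{f,n}$ for a suitable selected $f$ (as $f$ cannot take the value $1$ on $F$), so the selected $U_{f,n}$ form an $\omega$-cover. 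Without this (or an equivalent) construction your argument for $(3)\Rightarrow(1)$ is incomplete.
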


\begin{proof}

$(1)\Rightarrow(2)$. For each $B\in \Gamma_{\bf 0}$ we define
$\mathcal{U}_n(B)=\{f^{\leftarrow}[(-\frac{1}{2^n}, \frac{1}{2^n})]: f\in
B\}$. To see that $\mathcal U_n(B)\in\Gamma_F$, let $x\in X$.
Since $B\in\Gamma_{\bf 0}$, $B\setminus[x;(-\frac{1}{2^{n+1}},\frac{1}{2^{n+1}})]$
is finite. It follows that for $f\in B\cap[x;(-\frac{1}{2^{n+1}},\frac{1}{2^{n+1}})]$,
\[
x\in f^{\leftarrow}\left[(-\frac{1}{2^{n+1}},\frac{1}{2^{n+1}})\right]\subseteq
f^{\leftarrow}\left[[-\frac{1}{2^{n+1}},\frac{1}{2^{n+1}}]\right]\subseteq
f^{\leftarrow}\left[(-\frac{1}{2^{n}},\frac{1}{2^{n}})\right]\
\]
and we have shown that
$\{f^{\leftarrow}[[-\frac{1}{2^{n+1}},\frac{1}{2^{n+1}}]]:f\in
B\}$ is a $\gamma$ cover by zero sets; therefore $\mathcal
U_n(B)\in\Gamma_F$.

Let $B_n\in\Gamma_{\bf0}$, and for $U\in\mathcal U_n(B_n)$ fix $f_{U,n}\in B_n$
such that $U=f_{U,n}^{\leftarrow}[(-\frac{1}{2^n},\frac{1}{2^n})]$.

If TWO $\uparrow G_*(\Gamma_F,\Omega)$ holds, then we may find a
winning strategy $\sigma$ that not only produces $\omega$ covers,
but produces covers such that every cofinite subset is an $\omega$
cover. To see this, partition any play by ONE into infinitely many
subplays and consider the strategy that applies the known winning
strategy to each subplay (the beginnings of which are cofinal in
$\omega$).

Now let $\tau(\tuple{B_0,\dots,B_n})=\{f_{U,n}: U\in
\sigma(\tuple{\mathcal{U}_0(B_0),\dots,\mathcal{U}_n(B_n)})\}$.
(Note here that the cardinalities of moves made by $\sigma$
are no greater than the cardinalities produced by
$\tau$, so this proof applies to both $G_1$ and $G_{fin}$.)
We claim that ${\bf 0}\in \overline{\bigcup\limits_{n<\omega}
\tau(\tuple{B_0,...,B_n})}$.

To see this, let $G\in [X]^{<\omega}$ and $\epsilon>0$.
Then choose $n<\omega$ such that $\frac{1}{2^{n}}<\epsilon$
and $G\subseteq U$ for some $U\in\sigma(\tuple{
\mathcal U_0(B_0),\dots,\mathcal U_n(B_n)})$.
Then \[
G\subseteq f_{U,n}^{\leftarrow}\left[(-\frac{1}{2^n},\frac{1}{2^n})\right]
\subseteq f_{U,n}^{\leftarrow}[(-\epsilon,\epsilon)]
\]
demonstrates that $f_{U,n}\in\tau(\tuple{B_0,\dots,B_n})\cap
[G;(-\epsilon,\epsilon)]$, verifying our claim.

If TWO ${\markwin} G_*(\Gamma_F,\Omega)$ holds, then we may again
assume we have a witnessing strategy $\sigma$ producing omega
covers such that every cofinite subset is an $\omega$-cover, for
the same reason as above.

Now let $\tau(B_n,n)=\{f_{U,n}: U\in
\sigma(\mathcal U_n(B_n),n)\}$.
(Note again here that the cardinality of $\sigma$ matches the cardinality
of $\tau$, so this proof applies to both $G_1$ and $G_{fin}$.)
We claim that ${\bf 0}\in \overline{\bigcup\limits_{n<\omega}
\tau(B_n,n)}$.

To see this, let $G\in [X]^{<\omega}$ and $\epsilon>0$.
Then choose $n<\omega$ such that $\frac{1}{2^{n}}<\epsilon$
and $G\subseteq U$ for some $U\in\sigma(\mathcal U_n(B_n),n)$.
Then \[
G\subseteq f_{U,n}^{\leftarrow}\left[(-\frac{1}{2^n},\frac{1}{2^n})\right]
\subseteq f_{U,n}^{\leftarrow}[(-\epsilon,\epsilon)]
\]
demonstrates that $f_{U,n}\in\tau(B_n,n)\cap
[G;(-\epsilon,\epsilon)]$, verifying our claim.

$(2)\Rightarrow(3)$. For each $S\in\mathcal S$, select
$G_S\subseteq S$ such that $\lim G={\bf0}$. Given a strategy for TWO in
$G_*(\Gamma_{\bf0},\Omega_{\bf0})$, TWO's strategy for
$G_*(\mathcal S,\Omega_{\bf0})$ simply substitutes each $S\in\mathcal S$
with $G_S$.

$(3)\Rightarrow(1)$. For each $\mathcal{U}\in \Gamma_F$ define
$S(\mathcal{U})=\{f\in C(X): f\upharpoonright (X\setminus U)\equiv
1$
 for some $U\in \mathcal{U}\}$. By \cite[Lemma 6.5]{os2}, $S(\mathcal{U})$ is
sequentially dense in $C_p(X)$.
Let $\mathcal U_n\in\Gamma_F$, and for each $f\in S(\mathcal U_n)$ choose
$U_{f,n}\in\mathcal U_n$ where $f\upharpoonright(X\setminus U_{f,n})\equiv 1$.

So let $\sigma$ witness TWO $\uparrow
G_{*}(\mathcal{S},\Omega_{\bf0})$, so ${\bf 0}\in
\overline{\bigcup\limits_{n<\omega}
\sigma(\tuple{S(\mathcal{U}_0),..., S(\mathcal{U}_n)})}$. We then
define $\tau(\tuple{\mathcal{U}_0,...,\mathcal{U}_n})=\{U_{f,n}:
f\in \sigma(\tuple{S(\mathcal{U}_0),..., S(\mathcal{U}_n)})\}$.
Let $F\in[X]^{<\omega}$, so we may choose $n\in\omega$ such that
$f\in\sigma(\tuple{S(\mathcal U_0),\dots,S(\mathcal U_n)})\cap
[F;(-1/2,1/2)]$. Then as $f\upharpoonright F$ cannot map to $1$,
$F\subseteq U_{f,n}$. Therefore $\tau$ produces $\omega$-covers.

Finally, let $\sigma$ witness TWO ${\markwin}
G_{*}(\mathcal{S},\Omega_{\bf0})$, so ${\bf 0}\in
\overline{\bigcup\limits_{n<\omega} \sigma(S(\mathcal{U}_n),n)}$.
We then define $\tau(\mathcal{U}_n,n)=\{U_{f,n}: f\in
\sigma(S(\mathcal{U}_n),n)\}$. Let $F\in[X]^{<\omega}$, so we may
choose $n\in\omega$ such that $f\in\sigma(S(\mathcal U_n),n)\cap
[F;(-1/2,1/2)]$. Then as $f\upharpoonright F$ cannot map to $1$,
$F\subseteq U_{f,n}$. Therefore $\tau$ produces $\omega$-covers.
\end{proof}

\begin{corollary}\label{th34}
Let $X$ be a Tychonoff space with a coarser second countably
topology  and $*\in \{1,fin\}$. The following assertions are
equivalent:

\begin{enumerate}
\item TWO has a winning (Markov) strategy in $G_{*}(\Gamma_F,
\Omega)$ on $X$;

\item TWO has a winning (Markov) strategy in $G_{*}$
$(\Gamma_{\bf0},\Omega_{\bf0})$ on $C_p(X)$;

\item TWO has a winning (Markov) strategy in
$G_{*}(\mathcal{S},\Omega_{\bf0})$ on $C_p(X)$.

\item TWO has a winning (Markov) strategy in
$G_{*}(\mathcal{S},\mathcal{D})$ on $C_p(X)$;

\end{enumerate}

\end{corollary}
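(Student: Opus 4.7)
The plan is to observe that the preceding Theorem \ref{th34} already establishes the mutual equivalence of items (1), (2), and (3), so the only remaining task is to splice item (4) into this chain of equivalences.

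To establish (3) $\Leftrightarrow$ (4), I would apply the general bridge Theorem \ref{th5} with $\Phi = \mathcal S$ to the space $C_p(X)$. That theorem characterizes TWO having a winning (Markov) strategy in $G_*(\Phi,\mathcal D)$ as separability of the ambient space together with TWO having the corresponding strategy in $G_*(\Phi,\Omega_x)$ at every point of some countable dense subset. The separability of $C_p(X)$ is furnished directly by Noble's Theorem \ref{th31}, since the hypothesis $iw(X) = \aleph_0$ is exactly what is needed.

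The one piece of bookkeeping to handle is that item (3) as stated is localized at the single point $\mathbf 0$, whereas Theorem \ref{th5} calls for strategies at every point of a countable dense subset. I would close this gap using the homogeneity of $C_p(X)$ as a topological group: for any $g \in C_p(X)$, translation by $g-\mathbf 0 = g$ is a homeomorphism that sends $\mathcal S$ to $\mathcal S$ and $\Omega_{\mathbf 0}$ to $\Omega_g$, so a winning (Markov) strategy at $\mathbf 0$ conjugates to a winning (Markov) strategy at $g$. In particular, having a strategy in $G_*(\mathcal S,\Omega_{\mathbf 0})$ is equivalent to having strategies in $G_*(\mathcal S,\Omega_x)$ for every $x$ in any fixed countable dense set, which is what Theorem \ref{th5} requires.

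I do not foresee any substantive obstacle; the argument is a short appeal to Theorem \ref{th34}, Theorem \ref{th5}, and Theorem \ref{th31}, directly paralleling the earlier proof of Corollary \ref{th33}. The only subtlety worth flagging is the homogeneity step translating ``at $\mathbf 0$'' to ``at every point of a countable dense subset,'' but this is entirely routine in a topological group and preserves both the Markov and perfect-information character of the strategy.
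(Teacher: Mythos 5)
Your proposal is correct and follows essentially the same route as the paper: the preceding theorem gives (1)$\Leftrightarrow$(2)$\Leftrightarrow$(3), and (3)$\Leftrightarrow$(4) follows from Theorem \ref{th5} applied with $\Phi=\mathcal S$ together with Noble's Theorem \ref{th31} for separability of $C_p(X)$. The paper's proof is just the one-line citation of those two theorems, leaving implicit the homogeneity argument you spell out for passing from a strategy at $\mathbf 0$ to strategies at all points of a countable dense set; making that step explicit is a harmless (indeed helpful) addition.
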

\begin{proof}

By Theorems \ref{th5} and \ref{th31}, items (3) and (4) are
equivalent.
\end{proof}

\bibliographystyle{model1a-num-names}
\bibliography{<your-bib-database>}

\begin{thebibliography}{10}


\bibitem{arh0}
A.V. Arhangel'skii, \textit{The frequency spectrum of a
topological space and the classification of spaces}, Soviet Math.
Dokl. {\bf 13}, (1972), 1186--1189.


\bibitem{arh}
A.V. Arhangel'skii, \textit{Hurewicz spaces, analytic sets and fan
tightness of function spaces}, Soviet Math. Dokl. {\bf 33},
(1986), 396--399.


\bibitem{arh2}
A.V. Arhangel'skii, \textit{Topological function spaces}, Moskow.
Gos. Univ., Moscow, (1989), 223 pp. (Arhangel'skii A.V.,
\textit{Topological function spaces}, Kluwer Academic Publishers,
Mathematics and its Applications, 78, Dordrecht, 1992 (translated
from Russian)).

\bibitem{bd}
D. Barman, A. Dow, \textit{Selective separability and $SS^+$},
Topology Proc. {\bf 37}, (2011), 181--204.

\bibitem{bd2}
D. Barman, A. Dow, \textit{Proper forcing axiom and selective
separability}, Topology and its Applications. {\bf 159}, (2012),
806--813.


\bibitem{bbm1}
A. Bella, M. Bonanzinga, M. Matveev, \textit{Variations of
selective separability}, Topology and its Applications, {\bf 156},
(2009), 1241--1252.



\bibitem{bc}
A. Bella, C. Costantini, \textit{Sequential Separability vs
Selective Sequential Separability}, Filomat, {\bf 29}:1, (2015),
121--124.

\bibitem{beju}
A. J. Berner, I. Juh$\acute{a}$sz, \textit{Point-picking games and
HFDs}, Models and sets (Aachen, 1983), Lecture Notes in Math.,
vol. 1103, Springer, Berlin, (1984), 53--66.

\bibitem{cl}
S. Clontz, \textit{Relating games of Menger, countable fan
tightness, and selective separability}, Topology and its
Applications. {\bf 258}, (2019), 215--228.


\bibitem{cl1}
S. Clontz, \textit{Applications of limited information strategies
in Menger's game}, Comment. Math. Univ. Carol. {\bf 58}:2, (2017),
225--239.

\bibitem{cl2}
S. Clontz, \textit{Dual Selection Games}, arXiv e-prints, (2018),
arXiv:1809.10783.

\bibitem{cl3}
S. Clontz, J. Holshouser,
\textit{Limited Information Strategies and Discrete Selectivity},
arXiv e-prints, (2018),
arXiv:1806.06001.





%\bibitem{huvanM} M. Husek, J. van Mill, \textit{Recent Progress
%in General Topology II}, 2002, 652.



\bibitem{gn}
J. Gerlits, Zs. Nagy, \textit{Some properties of $C(X)$. I},
Topology Appl. {\bf 14}:2, (1982),  151--161.


\bibitem{jmss}
W. Just, A.W. Miller, M. Scheepers, P.J. Szeptycki, \textit{The
combinatorics of open covers, II}, Topology and its Applications.
{\bf 73}, (1996), 241--266.

\bibitem{koc}
Lj.D.R. Ko$\check{c}$inac, \textit{On the $\alpha_i$-selection
principles and games}, Contemporary Mathematics. {\bf 533},
(2011), 107--124.



\bibitem{ljos}
D. Lyakhovets, A.V. Osipov, \textit{Selection principles and games
in bitopological function spaces}, Filomat. (in press). arXiv
e-prints, (2019), arXiv:1903.08160.

\bibitem{mc}
R. A. McCoy, I. Ntantu, \textit{Topological properties of spaces
of continuous functions}, Lecture Notes in Mathematics,
vol. 1315, SPringer-Verlag, Berlin (1988).


\bibitem{mic}
E. A. Michael, \textit{A quintuple quotient quest}, General
Topology and its Applications. {\bf 2}, (1972), 91--138.

\bibitem{nob}
 N. Noble, \textit{The density character of functions spaces}, Proc. Amer. Math. Soc. (1974), V.42, is.I.-P., 228--233.


\bibitem{nosh}
T. Nogura, D. Shakhmatov, \textit{Amalgamation of convergent
sequences in locally compact group}, C.R. Acad. Sci. Paris
S$\acute{e}$r. I Math. {\bf 320}, (1995), 1349--1354.




\bibitem{os1}
A.V. Osipov, \textit{Application of selection principles in the
study of the properties of function spaces}, Acta Math. Hungar.,
{\bf 154}:2, (2018), 362--377.


\bibitem{os2}
A.V. Osipov, \textit{Classification of selectors for sequences of
dense sets of $C_p(X)$}, Topology and its Applications. {\bf 242},
(2018), 20--32.


\bibitem{os4}
A.V. Osipov, \textit{The functional characteristics of the
Rothberger and Menger properties}, Topology and its Applications.
{\bf 243}, (2018), 146--152.

\bibitem{os10}
A.V. Osipov, \textit{ A functional characterization of the
Hurewicz property}, to appear. arXiv e-prints, (2018),
arXiv:1805.11960.







%\bibitem{mil}
%A.W. Miller, D.H. Fremlin,  \textit{On some properties of
%Hurewicz, Menger and Rothberger}, Fund. Math., 129,  (1988),
%17�-33.

%\bibitem{rot}
%F. Rothberger, \textit{Eine Versch�arfung der Eigenschaft C},
%Fund. Math., 30,  (1938), 50-�55.


\bibitem{sak}
M. Sakai, \textit{Property $C^{''}$ and function spaces}, Proc.
Amer. Math. Soc. {\bf 104}, (1988), 917--919.

\bibitem{sak3}
M. Sakai, \textit{The Ramsey property for $C_p(X)$}, Acta Math.
Hungar., {\bf 128}, (1-2), (2010), 96--105.




\bibitem{sash}
M. Sakai, M. Scheepers, \textit{The combinatorics of open covers},
Recent Progress in General Topology III, (2013), Chapter,
751--799.


\bibitem{scheep}
M. Scheepers, \textit{$C_p(X)$ and Arhangelskii's
$\alpha_i$-spaces}, Topology and its Applications. {\bf 89},
(1998), 265--275.

\bibitem{sch3}
M. Scheepers, \textit{Combinatorics of open covers III: games,
$C_p(X)$}, Fundamenta Mathematicae. {\bf 152}, (1996), 231--254.

\bibitem{sch2}
M. Scheepers, \textit{Remarks on countable tightness}, Topology
and its Applications, {\bf 161}:1, (2014), 407--432.

\bibitem{sch1}
M. Scheepers, \textit{Combinatorics of open covers VI: Selectors
for sequences of dense sets}, Quaestiones Mathematicae. {\bf 22},
(1999), 109--130.


\bibitem{shak}
D. Shakhmatov, \textit{Convergence in the presence of algebraic
structure}, in: Recent Progress in General Topology, II (M.
Hu$\check{s}$ek and J. van Mill, eds.), North-Holland (Amsterdam,
2002), 463--484.


\bibitem{siw}
F. Siwiec, \textit{Sequence-covering and countably bi-quotient
mappings}, General Topology and its Applications. {\bf 1}, (1971),
143--154.

%\bibitem{vel}
%N.V. Velichko, \textit{On sequential separability}, Mathematical
%Notes. {\bf 78}:5, (2005), 610--614.

\end{thebibliography}

%% Authors are advised to submit their bibtex database files. They are
%% requested to list a bibtex style file in the manuscript if they do
%% not want to use model1a-num-names.bst.

%\section*{Bibliography}

%% References without bibTeX database:
%%\bibliographystyle{plain}

\end{document}